\documentclass[11pt]{amsart}

\usepackage[T1]{fontenc}
\usepackage{lmodern}
\usepackage{microtype}
\usepackage{amsmath,amssymb,amsthm,mathtools}
\usepackage{booktabs}
\usepackage{array}
\usepackage{enumitem}
\usepackage{xcolor}
\usepackage[colorlinks=true,linkcolor=blue!55!black,citecolor=blue!55!black,urlcolor=blue!55!black]{hyperref}

\allowdisplaybreaks
\setlist[itemize]{leftmargin=2em,itemsep=2pt,topsep=4pt}
\setlist[enumerate]{leftmargin=2.2em,itemsep=2pt,topsep=4pt}

\newtheorem{theorem}{Theorem}[section]
\newtheorem{proposition}[theorem]{Proposition}
\newtheorem{lemma}[theorem]{Lemma}
\newtheorem{corollary}[theorem]{Corollary}
\newtheorem{criterion}[theorem]{Criterion}
\theoremstyle{definition}
\newtheorem{definition}[theorem]{Definition}
\newtheorem{question}[theorem]{Question}

\theoremstyle{remark}
\newtheorem{remark}[theorem]{Remark}

\DeclareMathOperator{\depth}{depth}
\DeclareMathOperator{\Ass}{Ass}
\DeclareMathOperator{\ann}{ann}
\DeclareMathOperator{\rank}{rank}
\DeclareMathOperator{\Span}{span}
\DeclareMathOperator{\Soc}{Soc}

\DeclareMathOperator{\Hom}{Hom}
\DeclareMathOperator{\Ext}{Ext}
\DeclareMathOperator{\res}{res}

\DeclareMathOperator{\Supp}{Supp}

\newcommand{\F}{\mathbb F}
\newcommand{\kbar}{\overline{\mathbb F}_{2}}

\newcommand{\Z}{\mathbb Z}
\newcommand{\SG}[2]{\operatorname{SmallGroup}(#1,#2)}
\newcommand{\EA}{\mathcal E}
\newcommand{\Nrad}{\sqrt{0}}

\newcommand{\Res}{\operatorname{Res}}

\title[Centralizer excess and Carlson's conjecture]{Centralizer Excess as an Obstruction to Carlson's Depth Conjecture}
\author{Xinan Dai}
\address{
Key Laboratory for Information Science of Electromagnetic Waves,
College of Future Information and Technology,
Fudan University, Shanghai, China
}
\email{xndai23@m.fudan.edu.cn}
\author{Kuok Fai Chao}
\address{Lui Che Woo College, University of Macau, Macau, China}
\email{kchao@um.edu.mo}
\date{August 5, 2026}

\subjclass[2020]{Primary 20J06; Secondary 13C15, 20D15, 13D45, 18G50}
\keywords{modular group cohomology, depth, associated primes, centralizer excess, elementary abelian subgroups, annihilators, support varieties, finite 2-groups}

\begin{document}

\begin{abstract}
Let $G=\SG{128}{859}$ and $k=\kbar$.  The cohomology ring $H^*(G;k)$ has depth two, and we prove that the minimum quotient dimension of an associated prime is exactly three.  Okuyama's theorem shows that an integer $r$ occurs as such a dimension exactly when there is an elementary abelian subgroup $E\leq G$ of rank $r$ with
\[
\depth H^*(C_G(E);k)=r.
\]
We use this equivalence to define the centralizer excess.  If $d=\depth H^*(K;k)$, Carlson's equality holds precisely when some rank-$d$ subgroup has zero excess.  For $G$, all rank-two centralizers have positive excess.  A complete enumeration of the thirty-one actual rank-three elementary abelian subgroups finds six zero-excess witnesses.  Hence
\[
\omega_a\bigl(H^*(G;k)\bigr)=3.
\]
Since
\[
H^*(G\times(C_2)^n;k)\cong H^*(G;k)[u_1,\dots,u_n],
\]
the standard behavior of associated primes under polynomial extension gives
\[
\omega_a\bigl(H^*(G\times(C_2)^n;k)\bigr)=n+3
\qquad(n\geq0).
\]

We also study the class $\alpha_0=g+fc\in H^3(G;\F_2)$.  It is killed by two degree-one classes but restricts nontrivially to a rank-four elementary abelian subgroup.  It follows that
\[
\dim H^*(G;\F_2)/\ann(\alpha_0)=4.
\]
Thus two explicit linear annihilators do not force a two-dimensional cyclic support.  The assertion is about Krull dimension; it does not say that the support is the whole spectrum.
\end{abstract}

\maketitle

\section{Introduction}
\label{sec:introduction}

Let $K$ be a finite group and let $k$ be a field whose characteristic $p$ divides $|K|$.  The graded algebra
\[
H^*(K;k)=\Ext^*_{kK}(k,k)
\]
is a finitely generated graded $k$-algebra by Evens' theorem, and hence is Noetherian.  We shall use its depth, associated primes, local cohomology, and support \cite{Evens1961,EvensBook}.

Two classical theorems connect these commutative-algebra invariants with subgroup geometry.  Quillen proved that the spectrum of $H^*(K;k)$ is controlled, up to nilpotents, by restriction to elementary abelian $p$-subgroups; in particular,
\[
\dim H^*(K;k)=\rank_p(K)
\]
\cite{QuillenI,QuillenII,QuillenVenkov}.  Duflot proved that central $p$-subgroups provide regular elements and obtained
\[
\depth H^*(K;k)\geq \rank_p Z(S),
\]
where $S$ is a Sylow $p$-subgroup of $K$ \cite{Duflot1981}.  Thus maximal elementary abelian rank controls Krull dimension, whereas central elementary abelian rank supplies a lower bound for depth.

Carlson asked whether depth is always realized by an associated prime \cite[Question~3.1]{Carlson1995}.  Put
\[
A_K=H^*(K;k),
\qquad
\omega_a(A_K)=\min_{\mathfrak p\in\Ass A_K}\dim A_K/\mathfrak p.
\]
The standard inequality
\[
\depth A_K\leq\omega_a(A_K)
\]
shows that Carlson's question is an attainment problem.

\begin{question}[Carlson]
\label{q:carlson}
Does every finite group in modular characteristic satisfy
\[
\depth H^*(K;k)=\omega_a\bigl(H^*(K;k)\bigr)?
\]
\end{question}

Carlson's equality is known for several families of groups \cite{Green2003,Schafer2019,Garaialde2022}; see also \cite{Schafer2025} for a recent formulation in terms of centralizers.  Green and King computed the mod-$2$ cohomology rings of all groups of order $128$.  Their article and online database provide the presentations, Poincar\'e series, depth data, and restriction maps used below \cite{GreenKing2011,GreenKingDatabase}.

Using this infrastructure, one of the present authors, Dai, together with Deng, Shi, Wu, and Yang~\cite{DaiEtAl2026}, gave a negative answer to Question~\ref{q:carlson} for
\[
G=\SG{128}{859}.
\]
Put
\[
A_0:=H^*(G;\F_2),
\qquad
A:=\kbar\otimes_{\F_2}A_0
   \cong H^*(G;\kbar).
\]
All explicit presentations, restriction maps, and finite-dimensional calculations are carried out in $A_0$; the scalar extension $A$ is used when applying results stated over an algebraically closed field.  They proved
\[
\depth A=2,
\qquad
\omega_a(A)\geq3.
\]
Their proof has two steps.  First, a regular sequence followed by a nonzero socle class certifies that the ambient depth is two.  Second, Okuyama's theorem converts any hypothetical associated prime of quotient dimension two into a rank-two elementary abelian subgroup $E$ satisfying
\[
\depth H^*(C_G(E);\kbar)=2.
\]
An exhaustive enumeration gives seventy-five rank-two elementary abelian subgroups and six centralizer types, but every corresponding centralizer has depth at least three.  Hence no associated prime can have quotient dimension two.

A complete enumeration of the thirty-one actual rank-three elementary abelian subgroups yields four centralizer types and six subgroups of zero excess.  Okuyama's converse therefore produces an associated prime of quotient dimension three, and the preceding lower bound gives
\[
\omega_a(A)=3.
\]
For the centralizer type $C_4\times C_2\times C_2$, the depth-three assertion follows directly from Duflot's lower bound and Quillen's dimension theorem.  In the semidirect-product coordinates below, one such witness is
\[
E_3=\langle u^2,v^2,us^2\rangle,
\qquad
C_G(E_3)\cong C_4\times C_2\times C_2.
\]
The complete calculation and this direct check are recorded in Section~\ref{sec:stabilization}.

Okuyama's theorem gives a convenient reformulation of the counterexample.  Define
\[
\varepsilon_K(E;k)=\depth H^*(C_K(E);k)-\rank_p(E)
\]
for an elementary abelian $p$-subgroup $E\leq K$.  If $d=\depth H^*(K;k)$, then Carlson's equality holds if and only if some rank-$d$ subgroup has zero excess.  For the order-$128$ example, every rank-two subgroup has positive excess.  This is the local obstruction used throughout the paper; it is closely related to Schafer's centralizer-depth formulation \cite{Schafer2025}.

The excess behaves well under elementary abelian direct factors.  If it is positive at depth $d$ for $K$, then it is positive at depth $d+n$ for $K\times(C_2)^n$, so
\[
G_n=G\times(C_2)^n
\]
is a counterexample for every $n\geq0$.  Moreover, the K\"unneth isomorphism identifies its cohomology with the polynomial extension $A[u_1,\dots,u_n]$.  Associated primes extend under this faithfully flat polynomial extension, and quotient dimensions increase by $n$.  Hence
\[
\omega_a\bigl(H^*(G_n;\kbar)\bigr)=\omega_a(A)+n=n+3.
\]

We also compare two different kinds of witnesses.  The proof that $\depth A=2$ uses a socle element after quotienting by a regular sequence.  An associated prime, on the other hand, is the annihilator of an element in the original ring.  The class
\[
\alpha_0=g+fc\in H^3(G;\F_2)
\]
shows why these statements should not be conflated.  It satisfies
\[
a\alpha_0=b\alpha_0=0,
\]
but its restriction to a rank-four elementary abelian subgroup is nonzero.  Since the target cohomology ring is a domain,
\[
\dim H^*(G;\F_2)/\ann(\alpha_0)=4.
\]
We also give explicit representatives for the three conjugacy classes of maximal elementary abelian subgroups and compute the restriction ranks and nilpotent kernels in degrees two through four.

The main statements are the following.

\begin{theorem}[Centralizer criterion]
\label{thm:intro-criterion}
Let $K$ be a finite group, let $k$ be an algebraically closed field of characteristic $p\mid |K|$, and put
\[
d=\depth H^*(K;k).
\]
Then Carlson's equality holds for $K$ if and only if there exists an elementary abelian $p$-subgroup $E\leq K$ of rank $d$ such that
\[
\depth H^*(C_K(E);k)=d.
\]
Equivalently,
\[
\depth H^*(K;k)=\omega_a\bigl(H^*(K;k)\bigr)
\quad\Longleftrightarrow\quad
\varepsilon_d(K;k)=0.
\]
\end{theorem}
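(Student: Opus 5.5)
The plan is to derive the criterion directly from Okuyama's theorem, quoted in the introduction. It states that, for $k$ algebraically closed, an integer $r$ occurs as $\dim H^*(K;k)/\mathfrak p$ for some $\mathfrak p\in\Ass H^*(K;k)$ exactly when there is an elementary abelian $p$-subgroup $E\le K$ of rank $r$ with $\depth H^*(C_K(E);k)=r$. I take this as the black box, together with the standard inequality
\[
\depth A\le \dim A/\mathfrak p \quad\text{for every }\mathfrak p\in\Ass A,
\]
valid for a finitely generated graded (hence Noetherian, $*$-local) algebra such as $A_K=H^*(K;k)$ by Evens' theorem. Here $\varepsilon_d(K;k)$ means $\min\{\varepsilon_K(E;k): E\le K \text{ elementary abelian of rank } d\}$. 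So the second formulation in the theorem is just a restatement of the first, once this notation is fixed.

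The first step is the forward direction. Suppose $\depth A_K=\omega_a(A_K)=d$. Since $\Ass A_K$ is finite and nonempty ($A_K\ne0$), the minimum is attained by some $\mathfrak p\in\Ass A_K$ with $\dim A_K/\mathfrak p=d$. Okuyama's theorem with $r=d$ then produces $E$ of rank $d$ with $\depth H^*(C_K(E);k)=d$, so $\varepsilon_K(E;k)=0$.

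The second step is the converse. Given such an $E$ of rank $d$, Okuyama's converse gives an associated prime $\mathfrak p$ with $\dim A_K/\mathfrak p=d$. Hence $\omega_a(A_K)\le d$, and combined with $d=\depth A_K\le\omega_a(A_K)$ this gives equality.

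For the "equivalently" clause, I first need $\varepsilon_K(E;k)\ge0$ whenever $\rank E=d$. This holds because $E$ is central in $C_K(E)$, so Duflot's bound gives $\depth H^*(C_K(E);k)\ge \rank_p Z(S')\ge \rank E$, where $S'$ is a Sylow $p$-subgroup of $C_K(E)$ containing $E$. Therefore $\varepsilon_d(K;k)=0$ is exactly the statement that some rank-$d$ subgroup has zero excess.

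The only real obstacle is making sure the version of Okuyama's theorem invoked is the two-sided one, giving both existence and realization, and that it is stated for quotient dimensions of associated primes with $k$ algebraically closed. This is why the hypothesis on $k$ appears. The rest is bookkeeping.
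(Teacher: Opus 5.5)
Your proposal is correct and follows essentially the same route as the paper. Both directions come from the two halves of Okuyama's theorem, which the paper packages as $\Sigma_a(K;k)=\Sigma_c(K;k)$, combined with $\depth A_K\le\omega_a(A_K)$; your Duflot argument giving $\varepsilon_K(E;k)\ge 0$ is exactly what the paper uses to identify $\varepsilon_d(K;k)=0$ with the existence of a zero-excess rank-$d$ subgroup (and Duflot's bound also gives $d\ge 1$, so the hypothesis $s\ge 1$ in the paper's statement of Okuyama's theorem is met).
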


\begin{theorem}[Elementary-abelian stabilization]
\label{thm:intro-stabilization}
Let $K$ be a finite group over an algebraically closed field $k$ of characteristic $2$, and put
\[
d=\depth H^*(K;k).
\]
If $\varepsilon_d(K;k)>0$, then for every $n\geq0$,
\[
\depth H^*(K\times(C_2)^n;k)=d+n
\]
and
\[
\varepsilon_{d+n}(K\times(C_2)^n;k)>0.
\]
Hence $K\times(C_2)^n$ also violates Carlson's equality.
\end{theorem}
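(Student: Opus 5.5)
We need to prove Theorem~\ref{thm:intro-stabilization}. Write $K_n=K\times(C_2)^n$, and let $V=(C_2)^n$ denote the central elementary abelian factor. The plan has three ingredients: the Künneth formula for depth, a description of rank-$(d+n)$ elementary abelian subgroups of $K_n$ and their centralizers, and Theorem~\ref{thm:intro-criterion}.

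First, the depth. By Künneth, $H^*(K_n;k)\cong H^*(K;k)\otimes_k H^*(V;k)\cong H^*(K;k)[u_1,\dots,u_n]$ with $\deg u_i=1$. The variables $u_1,\dots,u_n$ form a regular sequence, and the quotient by them is $H^*(K;k)$. Hence $\depth H^*(K_n;k)=d+n$, since depth of a graded polynomial extension adds the number of variables.

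Second, the excess, which is the core of the argument. Fix $F\le K_n$ elementary abelian of rank $d+n$, and suppose for contradiction that $\depth H^*(C_{K_n}(F);k)=d+n$.
\begin{enumerate}
\item Since $V$ is central, $F':=FV$ is elementary abelian and $C_{K_n}(F')=C_{K_n}(F)$.
\item Every element of $K_n$ has the form $(x,v)$ with $x\in K$ and $v\in V$, and $(x,v)$ centralizes $(y,w)$ exactly when $x$ centralizes $y$. Writing $\pi\colon K_n\to K$ for the projection and $E=\pi(F)$, we get $C_{K_n}(F)=C_K(E)\times V$.
\item The group $E$ is elementary abelian of some rank $r$. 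Since $F\le E\times V$, we have $d+n\le r+n$, so $r\ge d$.
\item Applying the Künneth step to the centralizer gives
\[
\depth H^*(C_{K_n}(F);k)=\depth H^*(C_K(E);k)+n.
\]
So the assumption says $\depth H^*(C_K(E);k)=d$, with $\rank E=r\ge d$.
\end{enumerate}

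If $r=d$, this contradicts $\varepsilon_d(K;k)>0$. The main obstacle is the case $r>d$. Here I would take any rank-$d$ subgroup $E_0\le E$. Then $C_K(E)\le C_K(E_0)$, but depth is not monotone under passing to subgroups, so this containment does not settle it.

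For $r>d$ I would instead use Okuyama's theorem in both directions, as it underlies Theorem~\ref{thm:intro-criterion}.
\begin{enumerate}
\item The condition $\depth H^*(C_K(E);k)=d<r$ should, via the local-cohomology and Duflot-type argument behind Okuyama's result, produce an associated prime of $H^*(K;k)$ of quotient dimension at most $d$. Concretely, by Okuyama $H^*(K;k)$ has an associated prime whose variety sits in the $E$-stratum, with dimension equal to the depth of the centralizer of a subgroup of rank equal to that depth.
\item The inequality $\depth\le\omega_a$ then forces this quotient dimension to equal $d$.
\item By Theorem~\ref{thm:intro-criterion}, $\varepsilon_d(K;k)=0$, a contradiction.
\end{enumerate}

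A cleaner alternative bypasses the case split entirely. Since $H^*(K_n;k)=H^*(K;k)[u_1,\dots,u_n]$, the associated primes of the extension are exactly $\mathfrak p[u]$ for $\mathfrak p\in\Ass H^*(K;k)$, so $\omega_a$ increases by exactly $n$. Carlson's equality fails for $K$ by Theorem~\ref{thm:intro-criterion}, so $\omega_a(H^*(K;k))>d$. Therefore $\omega_a(H^*(K_n;k))>d+n$, and Theorem~\ref{thm:intro-criterion} applied to $K_n$ gives $\varepsilon_{d+n}(K_n;k)>0$. I would present this second route as the proof and keep the centralizer computation as a remark.
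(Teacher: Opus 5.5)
Your second route is correct, but it is not the paper's argument. The paper proves $\varepsilon_{d+n}(K_n;k)>0$ along your first route. For $E\le K_n$ of rank $d+n$ it sets $F=\pi_K(E)$ and notes that $\rank F=r\ge d$ and $C_{K_n}(E)=C_K(F)\times V_n$. When $r=d$ it uses the hypothesis. The case $r>d$, which you flagged as the main obstacle, is disposed of in one line by Duflot: $F\le Z(C_K(F))$ lies in the center of a Sylow $2$-subgroup $S$ of $C_K(F)$, so $\depth H^*(C_K(F);k)\ge\rank_2 Z(S)\ge r\ge d+1$. Thus the situation $\depth H^*(C_K(E);k)=d<r$ that you tried to treat with Okuyama never arises. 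The Okuyama sketch you wrote there would not work as stated in any case, since Okuyama's converse needs centralizer depth equal to the rank.

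Your chosen route passes through associated primes instead. It uses $\Ass A[u_1,\dots,u_n]=\{\mathfrak pA[u_1,\dots,u_n]:\mathfrak p\in\Ass A\}$, with quotient dimensions shifted by $n$. It then applies the criterion to $K$, which uses Okuyama~(i), and afterwards to $K_n$, which uses Okuyama~(ii).

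The paper's argument is more elementary: the excess statement needs only Duflot's bound and the depth shift, and Okuyama enters only to translate positive excess into failure of Carlson's equality. Yours gives the sharper identity $\omega_a(H^*(K_n;k))=\omega_a(H^*(K;k))+n$. The paper establishes that identity separately, by the same polynomial-extension fact, in its corollary on the exact family $G\times(C_2)^n$.
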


\begin{theorem}[Structure of the order-$128$ example]
\label{thm:intro-synthesis}
Let
\[
\begin{aligned}
G&=\SG{128}{859},\\
A_0&=H^*(G;\F_2),\\
A&=\kbar\otimes_{\F_2}A_0
  \cong H^*(G;\kbar).
\end{aligned}
\]
Then:
\begin{enumerate}[label=\textup{(\alph*)}]
\item $\depth A=2$, $\omega_a(A)=3$, and $\varepsilon_2(G;\kbar)\geq1$.  Thus the associated-prime defect $\omega_a(A)-\depth A$ is exactly one.
\item The group has a split extension
\[
G\cong(C_4\times C_4)\rtimes_\rho(C_4\times C_2),
\]
with the action $\rho$ given in Theorem~\ref{thm:semidirect}.  Its maximal elementary abelian subgroups form three conjugacy classes of ranks $3,4,4$.  For the explicit rank-three representative $E_3=\langle u^2,v^2,us^2\rangle$ one has
\[
C_G(E_3)\cong C_4\times C_2\times C_2.
\]
\item The low-degree dimensions are
\[
\dim A_0^1=3,
\quad
\dim A_0^2=7,
\quad
\dim A_0^3=12,
\quad
\dim A_0^4=18.
\]
Among these classes, the minimal algebra generators consist of three degree-one classes, three new degree-two classes, one new degree-three class, and no new degree-four class.
\item The class $\alpha_0=g+fc$ is nonzero, differs from $g$ by a decomposable class and therefore represents the same new degree-three direction modulo products, and satisfies $a\alpha_0=b\alpha_0=0$.  Nevertheless,
\[
\dim A_0/\ann_{A_0}(\alpha_0)
=
\dim A/\ann_A(1\otimes\alpha_0)
=4.
\]
\item The joint restriction maps to the three maximal elementary abelian classes have ranks $5,7,12$ in degrees $2,3,4$, respectively, and their kernels are the corresponding homogeneous pieces of the nilradical.
\end{enumerate}
\end{theorem}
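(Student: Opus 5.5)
The proof assembles the five parts from the two general theorems and from explicit finite computations in the Green--King presentation of $A_0$. I take them in the order (b), (c), (e), (a), (d), because the later parts use the subgroup data and the low-degree bases fixed in the earlier ones.

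For (b), I will quote the split extension $G\cong(C_4\times C_4)\rtimes_\rho(C_4\times C_2)$ from Theorem~\ref{thm:semidirect}. In those coordinates I will write down the three maximal elementary abelian classes: one rank-three class containing $E_3=\langle u^2,v^2,us^2\rangle$, and two rank-four classes. Maximality and non-conjugacy will be checked by a direct computation of centralizers and normalizers in the semidirect-product coordinates. Computing $C_G(E_3)$ amounts to solving the commutation relations with $u^2$, $v^2$ and $us^2$ under the action $\rho$. The solution set should come out as an abelian subgroup of order $16$ of exponent four with $2$-rank three, hence $C_4\times C_2\times C_2$.

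For (c), the dimensions $3,7,12,18$ are read off the Poincar\'e series in the Green--King database. The generator count follows by comparing $\dim A_0^n$ with the span of products of lower-degree generators: $3$ in degree one, $7-\dim\mathrm{Sym}$-image of $A_0^1$ in degree two, and so on. This is routine linear algebra modulo the listed relations.

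For (e), I will compute the restriction maps to representatives of the three maximal classes, using the database restriction data. I then stack the three maps and obtain the ranks $5,7,12$. Quillen's theorem says the kernel of the joint restriction to all maximal elementary abelian subgroups is exactly the nilradical, so the kernels are its homogeneous pieces.

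For (a), the depth equality $\depth A=2$ is the regular-sequence-plus-socle certificate of \cite{DaiEtAl2026}. That paper also gives $\omega_a(A)\ge3$; equivalently, all rank-two subgroups have positive excess, i.e.\ $\varepsilon_2\ge1$. For the upper bound I apply Theorem~\ref{thm:intro-criterion}, or rather Okuyama's converse at rank $3$, to $E_3$. Duflot's bound gives $\depth H^*(C_4\times C_2\times C_2)\ge 3$, since the group is abelian of $2$-rank three. Quillen's theorem gives Krull dimension $3$, hence depth at most $3$. So $E_3$ has zero excess at rank three, an associated prime of quotient dimension three exists, and $\omega_a(A)=3$.

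For (d), the relation $a\alpha_0=b\alpha_0=0$ is a finite check in degree four against the relations. Nonvanishing of $\alpha_0$ and its equivalence to $g$ modulo decomposables are immediate, since $fc$ is a product.

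For the dimension claim, I take a rank-four maximal $E$ from (b) with $\res_E(\alpha_0)\ne0$, which I will verify from the degree-three restriction matrix computed in (e). Then $\ann(\alpha_0)\subseteq\ker(A_0\to H^*(E)/(\res\,\ann\text{-image}))$; more precisely, if $x\alpha_0=0$ then $\res(x)\res(\alpha_0)=0$. Since $H^*(E)$ is a domain (polynomial over $\F_2$ in characteristic $2$), this forces $\res(x)=0$. Hence $\ann(\alpha_0)\subseteq\ker\res_E$. The image of $\res_E$ has Krull dimension $4$, because $H^*(E)$ is finite over the image (Evens/Quillen). Therefore $\dim A_0/\ann(\alpha_0)\ge4=\dim A_0$, which gives equality. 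Finally, I pass to $\kbar$ by flatness: $\ann_A(1\otimes\alpha_0)=\kbar\otimes\ann_{A_0}(\alpha_0)$, and Krull dimension is preserved under field extension.

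The main obstacle is the reliable computation in (e) and (d). This means transcribing the restriction maps for the correct subgroup representatives and confirming $\res_E(\alpha_0)\neq0$. I would cross-check it by verifying that the restriction maps are ring homomorphisms on the known relations.
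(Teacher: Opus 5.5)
Your plan is correct and follows the paper's own proof essentially step for step. The matching steps are these: the semidirect coordinates together with the Duflot--Quillen check that $C_G(E_3)\cong C_4\times C_2\times C_2$ has depth three, which feeds Okuyama's converse; low-degree linear algebra, where the paper row-reduces the truncated presentation and uses the Poincar\'e series only as a cross-check; stacked restriction matrices combined with Quillen detection; and the domain argument on $E_{4,1}$, where $\res^G_{E_{4,1}}(\alpha_0)=x_2x_3(x_2+x_3)$.

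One step you should make explicit is that your three representatives exhaust all maximal elementary abelian subgroups up to conjugacy; checking maximality and non-conjugacy does not establish that. The paper gets it from the exhaustive enumeration of the thirty-one involutions, and you need it both for the count in (b) and for identifying the joint kernels with the nilradical in (e).
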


\begin{theorem}[Exact stabilized family]
\label{thm:intro-exact-family}
For
\[
G_n=\SG{128}{859}\times(C_2)^n,
\qquad k=\kbar,
\]
one has, for every $n\geq0$,
\[
\depth H^*(G_n;k)=n+2,
\qquad
\omega_a\bigl(H^*(G_n;k)\bigr)=n+3.
\]
Thus every member of the family violates Carlson's equality by exactly one.
\end{theorem}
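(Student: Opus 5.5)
The proof proceeds in two independent halves: depth, then the associated-prime minimum. Both use the Künneth isomorphism
\[
H^*(G_n;k)\cong H^*(G;k)\otimes_k H^*((C_2)^n;k)\cong A[u_1,\dots,u_n],
\]
with $\deg u_i=1$. It holds because $k$ has characteristic $2$ and $H^*(C_2;k)=k[u]$.

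\textbf{Depth.} Theorem~\ref{thm:intro-stabilization} applies directly. Part~(a) of Theorem~\ref{thm:intro-synthesis} gives $d=\depth A=2$ and $\varepsilon_2(G;\kbar)\geq1>0$, so Theorem~\ref{thm:intro-stabilization} yields $\depth H^*(G_n;k)=n+2$. Alternatively, one can argue in commutative algebra: $u_1,\dots,u_n$ is a homogeneous regular sequence on $A[u_1,\dots,u_n]$ with quotient $A$. Since depth is additive along homogeneous regular sequences in the graded-local setting (with the irrelevant ideal), the depth is $2+n$.

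\textbf{Associated primes.} Put $B=A[u_1,\dots,u_n]$. Polynomial extension is flat, and for a Noetherian ring one has
\[
\Ass_B(B)=\{\mathfrak pB:\mathfrak p\in\Ass_A(A)\}.
\]
This is the standard statement $\Ass(M\otimes_A A[x])=\{\mathfrak p A[x]\}$; the associated primes of $A[x]/\mathfrak pA[x]=(A/\mathfrak p)[x]$ reduce to $\mathfrak pA[x]$ alone because $(A/\mathfrak p)[x]$ is a domain. Moreover,
\[
\dim B/\mathfrak pB=\dim (A/\mathfrak p)[u_1,\dots,u_n]=\dim A/\mathfrak p+n,
\]
since $A/\mathfrak p$ is a finitely generated $k$-algebra, so the dimension formula for polynomial rings over affine domains applies. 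Taking minima gives $\omega_a(B)=\omega_a(A)+n$. Part~(a) of Theorem~\ref{thm:intro-synthesis} supplies $\omega_a(A)=3$, hence $\omega_a(H^*(G_n;k))=n+3$.

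The step needing the most care is identifying the Künneth isomorphism as an isomorphism of graded $k$-algebras. Because $\operatorname{char}k=2$, graded commutativity signs are irrelevant, so it is an honest polynomial extension. With that settled, the associated-prime transfer applies. Finally, the defect is $(n+3)-(n+2)=1$, which proves Theorem~\ref{thm:intro-exact-family}.
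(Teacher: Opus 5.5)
Your proposal is correct and follows the paper's proof of Corollary~\ref{cor:infinite-family} essentially step for step. Depth comes from Theorem~\ref{thm:stabilization}, which is Lemma~\ref{lem:depth-shift} and the same as your regular-sequence argument. For $\omega_a$, both use the K\"unneth identification $H^*(G_n;k)\cong A[u_1,\dots,u_n]$, the extension $\Ass(B)=\{\mathfrak pB:\mathfrak p\in\Ass A\}$, the shift $\dim B/\mathfrak pB=\dim A/\mathfrak p+n$, and $\omega_a(A)=3$ from Corollary~\ref{cor:omega-exact}. Your justification of the associated-prime transfer, via flat base change and the fact that $(A/\mathfrak p)[u_1,\dots,u_n]$ is a domain, spells out a step the paper only cites as standard.
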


The depth statement, the lower bound $\omega_a(A)\geq3$, the rank-two enumeration, and the corresponding depth certificates in part~(a) are due to Dai--Deng--Shi--Wu--Yang \cite{DaiEtAl2026}.  We complete the rank-three calculation below, include a direct verification of one depth-three centralizer, and derive the resulting exact formula for the stabilized family.  The remaining assertions are proved here.

Section~\ref{sec:framework} proves the centralizer criterion, and Section~\ref{sec:counterexample} recalls the finite certificates for the counterexample.  Sections~\ref{sec:witnesses}--\ref{sec:detection} treat the group structure, the low-degree ring, and the class $\alpha_0$.  Section~\ref{sec:stabilization} proves the product theorem, records the complete rank-three calculation, derives $\omega_a(A)=3$ and the exact formula $\omega_a(H^*(G_n;k))=n+3$, and gives a direct check of one depth-three centralizer.  The appendix gives the linear-algebra procedure used for the low-degree computations.

\section{Depth, Quillen primes, and centralizer realizability}
\label{sec:framework}

Throughout this section, $K$ is a finite group, $p\mid |K|$, and $k$ is an algebraically closed field of characteristic $p$.  Write
\[
A_K=H^*(K;k),
\qquad
(A_K)_+=\bigoplus_{n>0}H^n(K;k).
\]
At odd primes, prime ideals, associated primes, depth, and support are understood in the standard graded-commutative sense; equivalently, the spectral statements may be read in the even subring.  All explicit computations in the present paper occur at $p=2$, where $A_K$ is an ordinary commutative graded algebra.

\subsection{Depth and associated-prime dimension}

\begin{definition}
A sequence $x_1,\dots,x_d\in(A_K)_+$ of homogeneous elements is $A_K$-regular if multiplication by $x_i$ is injective on
\[
A_K/(x_1,\dots,x_{i-1})
\]
for every $i$, and the final quotient is nonzero.  The maximum possible length is $\depth A_K$.
\end{definition}

\begin{definition}
A prime ideal $\mathfrak p\subseteq A_K$ is associated if $\mathfrak p=\ann_{A_K}(x)$ for some $x\in A_K$.  Define
\[
\omega_a(A_K)=\min_{\mathfrak p\in\Ass A_K}\dim A_K/\mathfrak p.
\]
\end{definition}

For every $\mathfrak p\in\Ass A_K$, localization gives
\[
\depth A_K\leq \depth (A_K)_{\mathfrak p}+\dim A_K/\mathfrak p
=\dim A_K/\mathfrak p;
\]
see, for example, \cite{BrunsHerzog,Eisenbud}.  Consequently,
\begin{equation}
\label{eq:standard-ineq}
\depth A_K\leq\omega_a(A_K).
\end{equation}

\subsection{Quillen primes and Okuyama's equivalence}

Let $E\leq K$ be elementary abelian of rank $r$.  Define the Quillen prime
\[
\mathfrak p_E=(\res^K_E)^{-1}\bigl(\sqrt{0}\,\bigr),
\]
where the nilradical is taken in $H^*(E;k)$.  Quillen's theory gives
\[
\dim A_K/\mathfrak p_E=r
\]
and describes the spectrum of $A_K$ through these elementary abelian strata \cite{QuillenI,QuillenII}.  At $p=2$,
\[
H^*(E;k)\cong k[t_1,\dots,t_r],
\qquad |t_i|=1,
\]
so the target is a domain and its nilradical is zero.

We shall use both directions of the following theorem.

\begin{theorem}[Okuyama \cite{Okuyama2010}]
\label{thm:okuyama}
Let $s\geq1$.
\begin{enumerate}[label=\textup{(\roman*)}]
\item If $\mathfrak p\in\Ass A_K$ and
\[
\dim A_K/\mathfrak p=s,
\]
then there exists an elementary abelian $p$-subgroup $E\leq K$ of rank $s$ such that
\[
\mathfrak p=\mathfrak p_E
\qquad\text{and}\qquad
\depth H^*(C_K(E);k)=s.
\]
\item Conversely, if $E\leq K$ is elementary abelian of rank $s$ and
\[
\depth H^*(C_K(E);k)=s,
\]
then $\mathfrak p_E\in\Ass A_K$.
\end{enumerate}
\end{theorem}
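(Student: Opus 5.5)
The plan is to reduce both directions to a statement about the centralizer $C=C_K(E)$, where $E$ is central, and then to transport associatedness of $\mathfrak p_E$ between $A_K$ and $H^*(C;k)$ by a localization comparison.

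\emph{Step 1: associated primes are Quillen primes.} Associated primes of a graded Noetherian ring are homogeneous. I would first show that every $\mathfrak p\in\Ass A_K$ equals $\mathfrak p_E$ for some elementary abelian $E$ with $\rank E=\dim A_K/\mathfrak p$. By Quillen stratification, a homogeneous prime $\mathfrak p$ lies in a unique stratum $V_E^+$. If $\mathfrak p=\ann(x)$, then $x$ is annihilated by a power of $\mathfrak p$ after localization. I would apply Carlson's/Benson's detection by restriction to centralizers: $x$ survives restriction to $C_K(E)$ after inverting the relevant Euler class of $E$. This forces $\mathfrak p$ to contain the generic point $\mathfrak p_E$ of its stratum, and hence to equal it. Then Quillen gives $\dim A_K/\mathfrak p_E=\rank E$.

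\emph{Step 2: the central case.} Put $C=C_K(E)$ and $s=\rank E$, with $E$ central in $C$. Duflot's theorem gives $\depth H^*(C;k)\ge s$, realized by a regular sequence $\zeta_1,\dots,\zeta_s$ coming from $E$; these are Chern/Euler classes restricting to a system of parameters on $H^*(E;k)$. Let $\mathfrak q_E\subseteq H^*(C;k)$ be the Quillen prime of $E$ in $C$. I claim that $\depth H^*(C;k)=s$ if and only if $\mathfrak q_E\in\Ass H^*(C;k)$. The proof would work through $\bar H=H^*(C;k)/(\zeta_1,\dots,\zeta_s)$. Depth exactly $s$ means $\bar H$ has a nonzero socle element. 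Because $E$ is central, I would use the $H^*(E)$-comodule structure coming from the multiplication $E\times C\to C$ (Duflot/Broto--Henn) to lift this socle element to an element of $H^*(C;k)$ whose annihilator is exactly $\mathfrak q_E$. Conversely, if $\mathfrak q_E=\ann(y)$, then $\dim H^*(C)/\mathfrak q_E=s$ together with \eqref{eq:standard-ineq} gives depth $\le s$.

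\emph{Step 3: transport.} Let $\mathcal S$ be the multiplicative set of elements of $A_K$ whose restriction to $E$ is not nilpotent. I would use the localization theorem (Quillen; in sharper form, Lannes' $T$-functor/Henn--Lannes--Schwartz): restriction $A_K\to H^*(C;k)$ becomes, after inverting $\mathcal S$ and taking $N_K(E)/C_K(E)$-invariants, an isomorphism onto the localized centralizer cohomology near $\mathfrak q_E$. Since $\mathfrak p\in\Ass$ iff $\depth$ of the localization at $\mathfrak p$ is zero, and the invariants are a direct summand when the relevant Weyl group acts freely on the generic point, I get $\mathfrak p_E\in\Ass A_K\iff\mathfrak q_E\in\Ass H^*(C;k)$. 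Combining this with Steps 1 and 2 yields (i) and (ii).

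\emph{Main obstacle.} The hard part is Step 3: making the localization isomorphism precise enough to preserve associated primes, and not only supports, especially at $p=2$, where the invariants need not split off naively. I would first try to work over $\mathcal S^{-1}$ with Lannes' $T$-functor, which gives an honest ring isomorphism of $\mathcal S^{-1}A_K$ with $\bigl(\mathcal S^{-1}H^*(C;k)\bigr)^{W}$ at the component of $E$. I would then check that $\mathfrak p_E$ and $\mathfrak q_E$ correspond there and that depth zero is detected on either side.
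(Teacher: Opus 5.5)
The paper does not prove this statement. It imports it from \cite{Okuyama2010} and uses it as a black box, so your outline has to stand on its own. Your architecture is reasonable: associated primes are Quillen primes, a central case for $C=C_K(E)$, and a local transport between $\mathfrak q_E\subseteq H^*(C;k)$ and $\mathfrak p_E\subseteq A_K$. The easy half of Step~2 is also correct: $\mathfrak q_E\in\Ass H^*(C;k)$ gives $s\le\depth H^*(C;k)\le\dim H^*(C;k)/\mathfrak q_E=s$. But the load-bearing steps are asserted rather than proved, and Step~3 is asserted in a false form.

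\emph{Step~3.} Take $K=D_8$ in characteristic $2$, let $E=\langle\tau\rangle$ with $\tau$ a non-central involution, and let $z$ be the central involution. Then $C_K(E)=\langle\tau,z\rangle$ and $W=1$, yet the $K$-conjugate $\langle\tau z\rangle$ of $E$ also lies in $C_K(E)$. For suitable generators, $A_K=k[x,y,w_2]/(xy)$ and $H^*(C_K(E);k)=k[a,b]$, with $a,b$ dual to $\tau,z$ and restriction $x\mapsto a$, $y\mapsto0$, $w_2\mapsto ab+b^2$. Then $\mathfrak p_E=(y,w_2)$, and $\mathcal S^{-1}A_K=k[x,w_2]_{(w_2)}$ is local. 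By contrast, $\mathcal S^{-1}k[a,b]$ has two maximal ideals, the extensions of $(b)=\mathfrak q_E$ and $(a+b)=\mathfrak q_{\langle\tau z\rangle}$. Even after localizing at $\mathfrak q_E$, the map $k[x,w_2]_{(w_2)}\to k[a,b]_{(b)}$ misses $b$, because the automorphism $b\mapsto a+b$ fixes $a$ and $ab+b^2$.

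The failure has nothing to do with $W$. Whenever a $K$-conjugate $E'\neq E$ lies in $C_K(E)$, the prime $\mathfrak q_{E'}$ also lies over $\mathfrak p_E$ and is not $W$-conjugate to the $W$-fixed prime $\mathfrak q_E$. What the transport actually needs is a local statement carrying depth zero in both directions. Examples would be an isomorphism onto $W$-invariants after completion, or flatness of $(A_K)_{\mathfrak p_E}\to H^*(C_K(E);k)_{\mathfrak q_E}$ with zero-dimensional closed fibre; both hold in this example. Quillen's $F$-isomorphism theorem is blind to nilpotents and gives neither, so this must be proved or replaced by a restriction--transfer argument.

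\emph{Step~2, forward direction.} This is the deeper gap, and you do not flag it. If $\depth H^*(C;k)=s=\rank E$ with $E$ central in $C$, Duflot's bound forces $E=\Omega_1Z(S)$ for a Sylow $p$-subgroup $S$ of $C$. You are therefore in the sharp-Duflot case, where $\mathfrak q_E\in\Ass H^*(C;k)$ is a substantive theorem in its own right (compare \cite{Green2003}). ``Lift the socle element using the comodule structure'' is not an argument. A nonzero socle class modulo a regular sequence of length $s$ need not lift to any element with prime annihilator of dimension $s$. This is precisely how the paper's group $G$ fails: it has the regular sequence $f_1,f_2$ and the socle class $w$, yet no associated prime of dimension two. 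You must show how centrality of $E$, and the fact that the $\zeta_i$ come from $E$, produce such a lift, or cite the result.

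\emph{Step~1.} The inclusion you need is $\mathfrak p\subseteq\mathfrak p_E$, since every prime in the $E$-stratum already contains $\mathfrak p_E$. Survival of $x$ on $C_K(E)$ after inverting an Euler class does not give that inclusion. Once a correct transport is available at all primes of the $E$-stratum, Step~1 becomes cheap. $H^*(C;k)$ is free over $P=k[\zeta_1,\dots,\zeta_s]$, so an associated prime of it in the $E$-stratum meets $P$ in $0$. Incomparability for the finite extension $\res(P)\subseteq H^*(E;k)$ then forces that prime to equal $\mathfrak q_E$.
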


Part~(i) is the implication used in the counterexample of \cite{DaiEtAl2026}.  Part~(ii) shows that the centralizer condition is also sufficient.

\subsection{Realizability spectra and centralizer excess}

\begin{definition}
Define
\begin{align*}
\Sigma_a(K;k)
&=\{\dim A_K/\mathfrak p:\mathfrak p\in\Ass A_K\},\\
\Sigma_c(K;k)
&=\{r:\text{there exists }E\leq K,
\ E\cong(C_p)^r,
\ \depth H^*(C_K(E);k)=r\}.
\end{align*}
\end{definition}

\begin{theorem}[Equality of realizability spectra]
\label{thm:spectra-equality}
For every finite group $K$,
\[
\Sigma_a(K;k)=\Sigma_c(K;k).
\]
\end{theorem}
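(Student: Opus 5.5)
The plan is to prove the two inclusions separately. Each follows from one direction of Theorem~\ref{thm:okuyama} combined with Quillen's dimension formula $\dim A_K/\mathfrak p_E=\rank E$.

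For $\Sigma_a(K;k)\subseteq\Sigma_c(K;k)$, let $s\in\Sigma_a(K;k)$ and choose $\mathfrak p\in\Ass A_K$ with $\dim A_K/\mathfrak p=s$.

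\begin{itemize}
\item If $s\geq1$, Theorem~\ref{thm:okuyama}(i) directly gives an elementary abelian $E$ of rank $s$ with $\depth H^*(C_K(E);k)=s$. Hence $s\in\Sigma_c(K;k)$.
\item If $s=0$, the definition of $\Sigma_c$ is satisfied by the trivial subgroup $E=1$. Its centralizer is $K$ itself, so we must check that $\depth A_K=0$.
\end{itemize}

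The case $s=0$ is where care is needed. Since $p$ divides $|K|$, Duflot's theorem gives $\depth A_K\geq\rank_p Z(S)\geq1$. On the other hand, inequality \eqref{eq:standard-ineq} forces $\depth A_K\leq 0$ once there is an associated prime with $s=0$. These two facts are incompatible, so $0\notin\Sigma_a(K;k)$ and this case never occurs.

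For the reverse inclusion $\Sigma_c(K;k)\subseteq\Sigma_a(K;k)$, let $r\in\Sigma_c(K;k)$ with witness $E\cong(C_p)^r$ satisfying $\depth H^*(C_K(E);k)=r$.

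\begin{itemize}
\item If $r=0$, then $E=1$ and $\depth A_K=0$, which contradicts Duflot's bound exactly as above. So $0\notin\Sigma_c(K;k)$ either.
\item If $r\geq1$, Theorem~\ref{thm:okuyama}(ii) gives $\mathfrak p_E\in\Ass A_K$. Quillen's theorem gives $\dim A_K/\mathfrak p_E=r$, so $r\in\Sigma_a(K;k)$.
\end{itemize}

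The only real obstacle is the rank-zero boundary case, since Okuyama's theorem is stated only for $s\geq1$. I handle it uniformly with the observation that both spectra exclude $0$ whenever $p\mid|K|$: Duflot's bound forces $\depth A_K\geq1$, and \eqref{eq:standard-ineq} then rules out quotient dimension $0$ among associated primes.

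A secondary point to check is that the Quillen formula $\dim A_K/\mathfrak p_E=\rank E$ holds for every elementary abelian $E$, not only maximal ones. This is standard: $\res^K_E$ makes $H^*(E;k)$ finite over the image, and the image modulo nilpotents has dimension $r$. With both points settled, the two inclusions combine to give $\Sigma_a(K;k)=\Sigma_c(K;k)$.
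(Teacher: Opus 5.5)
Your proposal is correct and is essentially the paper's proof: the two inclusions come from parts (i) and (ii) of Okuyama's theorem, and Quillen's formula $\dim A_K/\mathfrak p_E=\rank E$ (recorded in the paper just before Okuyama's theorem) turns $\mathfrak p_E\in\Ass A_K$ into $r\in\Sigma_a(K;k)$. Your separate treatment of rank zero via Duflot's bound $\depth A_K\geq\rank_p Z(S)\geq 1$ is sound and worthwhile, since the paper's one-line proof cites Okuyama's theorem, which is stated only for $s\geq1$, without noting that $0$ lies in neither set.
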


\begin{proof}
The inclusion $\Sigma_a(K;k)\subseteq\Sigma_c(K;k)$ is Theorem~\ref{thm:okuyama}(i), and the reverse inclusion is Theorem~\ref{thm:okuyama}(ii).
\end{proof}

\begin{definition}
\label{def:excess}
For an elementary abelian $p$-subgroup $E\leq K$, define
\[
\varepsilon_K(E;k)=\depth H^*(C_K(E);k)-\rank_p(E).
\]
For every rank $r$ occurring in $K$, set
\[
\varepsilon_r(K;k)=
\min_{\substack{E\leq K\\E\cong(C_p)^r}}
\varepsilon_K(E;k).
\]
If $d=\depth A_K$, we call $\varepsilon_d(K;k)$ the depth-level centralizer excess.
\end{definition}

Because $E\leq Z(C_K(E))$, choose a Sylow $p$-subgroup of $C_K(E)$ containing $E$.  Duflot's theorem gives
\[
\depth H^*(C_K(E);k)\geq\rank_p(E),
\]
and hence $\varepsilon_K(E;k)\geq0$.

\begin{corollary}[Excess detects associated-prime dimensions]
\label{cor:excess-spectrum}
For every rank $r$ occurring in $K$,
\[
r\in\Sigma_a(K;k)
\quad\Longleftrightarrow\quad
r\in\Sigma_c(K;k)
\quad\Longleftrightarrow\quad
\varepsilon_r(K;k)=0.
\]
Consequently,
\[
\omega_a(A_K)=\min\{r:\varepsilon_r(K;k)=0\}.
\]
\end{corollary}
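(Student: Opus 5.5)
The first equivalence, $r\in\Sigma_a(K;k)\Leftrightarrow r\in\Sigma_c(K;k)$, is Theorem~\ref{thm:spectra-equality}, so nothing further is needed there. For the second, $r\in\Sigma_c(K;k)\Leftrightarrow\varepsilon_r(K;k)=0$, we work directly from Definition~\ref{def:excess} and the Duflot remark that follows it. That remark gives $\varepsilon_K(E;k)\geq0$ for every elementary abelian $E$. Since $K$ is finite there are only finitely many rank-$r$ subgroups, so the minimum defining $\varepsilon_r(K;k)$ is attained. Hence $\varepsilon_r(K;k)=0$ holds exactly when some $E\cong(C_p)^r$ has $\depth H^*(C_K(E);k)-r=0$, which is the defining condition for $r\in\Sigma_c(K;k)$.

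For the formula for $\omega_a(A_K)$, observe that by definition $\omega_a(A_K)=\min\Sigma_a(K;k)$. We must check that each element of $\Sigma_a(K;k)$ is a rank occurring in $K$, so that the equivalence above applies to it.

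For $s\geq1$ this follows from Theorem~\ref{thm:okuyama}(i), which produces a rank-$s$ subgroup. The case $s=0$ needs separate treatment, since Okuyama's theorem is stated only for $s\geq1$. If $\dim A_K/\mathfrak p=0$ for some associated prime $\mathfrak p$, then $\mathfrak p$ is the irrelevant ideal $(A_K)_+$ and $\depth A_K=0$. This contradicts Duflot's bound $\depth A_K\geq\rank_pZ(S)\geq1$, which holds because $p$ divides $|K|$ and so the Sylow subgroup $S$ is nontrivial with nontrivial centre. Rank $0$ is thus excluded on both sides, and we read ``ranks occurring'' as $r\geq1$; alternatively, if the trivial subgroup is admitted, $\varepsilon_0=\depth A_K>0$ and the formula is unaffected.

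Finally, $\Ass A_K$ is nonempty because $A_K$ is Noetherian and nonzero, so the minimum is taken over a nonempty set. We then obtain
\[
\omega_a(A_K)=\min\Sigma_a(K;k)=\min\{r:\varepsilon_r(K;k)=0\}.
\]
The only point requiring care is the rank-zero boundary case handled above; everything else is bookkeeping with Theorem~\ref{thm:spectra-equality}.
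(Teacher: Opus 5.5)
Your proof is correct and follows the paper's route. You unwind Definition~\ref{def:excess}, using Duflot's nonnegativity of the excess and finiteness to see that $\varepsilon_r(K;k)=0$ means some rank-$r$ subgroup has centralizer depth $r$, and then invoke Theorem~\ref{thm:spectra-equality}. Your separate treatment of the rank-zero boundary case and of $\Ass A_K\neq\emptyset$ makes explicit details the paper leaves implicit, which is worthwhile because Theorem~\ref{thm:okuyama} is stated only for $s\geq1$.
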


\begin{proof}
By definition, $\varepsilon_r(K;k)=0$ precisely when some rank-$r$ elementary abelian subgroup has centralizer depth $r$.  The result follows from Theorem~\ref{thm:spectra-equality}.
\end{proof}

\begin{criterion}[Exact centralizer criterion for Carlson's equality]
\label{crit:rank-obstruction}
Let $d=\depth A_K$.  Then
\[
\depth A_K=\omega_a(A_K)
\quad\Longleftrightarrow\quad
\varepsilon_d(K;k)=0.
\]
Equivalently,
\[
\depth A_K<\omega_a(A_K)
\quad\Longleftrightarrow\quad
\varepsilon_d(K;k)>0.
\]
\end{criterion}

\begin{proof}
Inequality~\eqref{eq:standard-ineq} gives $d\leq\omega_a(A_K)$.  By Corollary~\ref{cor:excess-spectrum}, equality holds exactly when $d\in\Sigma_a(K;k)$, which is equivalent to $\varepsilon_d(K;k)=0$.
\end{proof}

Thus failure at depth $d$ means that every rank-$d$ centralizer has depth strictly greater than $d$.  By the converse direction of Okuyama's theorem, no further existence condition is needed: a rank-$r$ subgroup of zero excess already gives an associated Quillen prime of quotient dimension $r$.  Finding a convenient element with that annihilator is a separate, finer question.

\subsection{Low-degree detection}

Let $\EA_{\max}(K)$ be representatives of the conjugacy classes of maximal elementary abelian $p$-subgroups, and define
\[
\Res^n_{\max}:
H^n(K;\F_p)\longrightarrow
\bigoplus_{E\in\EA_{\max}(K)}H^n(E;\F_p).
\]
Every elementary abelian subgroup is contained in a maximal one, and inner automorphisms act trivially on cohomology.  Quillen's $F$-isomorphism theorem therefore gives
\begin{equation}
\label{eq:nil-kernel}
\ker\Res^n_{\max}=\sqrt{0}\cap H^n(K;\F_p).
\end{equation}
Thus $\ker\Res^n_{\max}$ is the degree-$n$ part of the nilradical, and the rank records the classes detected on maximal elementary abelian subgroups.

\section{The exact counterexample}
\label{sec:counterexample}

We recall the counterexample of \cite{DaiEtAl2026}.  The explicit calculations are over \(\F_2\); scalar extension to \(\kbar\) is used for Okuyama's theorem.

\subsection{The group and the main gap}

Retain the notation
\[
\begin{aligned}
G&=\SG{128}{859},\\
A_0&=H^*(G;\F_2),\\
A&=\kbar\otimes_{\F_2}A_0\cong H^*(G;\kbar).
\end{aligned}
\]
The following data are recorded in the Green--King computation and database \cite{GreenKing2011,GreenKingDatabase}.

\begin{table}[ht]
\centering
\caption{Basic structural data for \(G=\SG{128}{859}\).}
\label{tab:basic-data}
\begin{tabular}{@{}ll@{}}
\toprule
Invariant & Value \\
\midrule
Order; exponent & \(128;\ 4\) \\
Minimal number of group generators & \(3\) \\
\(2\)-rank; center \(2\)-rank & \(4;\ 1\) \\
Ranks of maximal elementary abelian classes & \(3,4,4\) \\
Krull dimension; depth; Duflot bound & \(4;\ 2;\ 1\) \\
Minimal ring generators; maximal generator degree & \(13;\ 8\) \\
Minimal relations; maximal relation degree & \(44;\ 14\) \\
\bottomrule
\end{tabular}
\end{table}

The Krull dimension is four because the largest elementary abelian subgroup has rank four.  The depth is two, so the ring has Cohen--Macaulay defect two.  The key question is whether some associated prime realizes quotient dimension two.

Dai--Deng--Shi--Wu--Yang proved the following theorem \cite{DaiEtAl2026}.
\begin{theorem}
\label{thm:counterexample}
For \(G=\SG{128}{859}\),
\[
\depth H^*(G;\kbar)=2,
\qquad
\omega_a\bigl(H^*(G;\kbar)\bigr)\geq3.
\]
In particular, Carlson's associated-prime depth equality fails.
\end{theorem}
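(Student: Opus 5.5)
The theorem has two parts, and I would prove them separately.

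\emph{Depth equals two.} I would first obtain the lower bound $\depth A\geq2$ from an explicit length-two homogeneous regular sequence $x_1,x_2\in A_0$. The natural candidates are a Duflot class coming from the central $C_2$ (for instance a Chern class of a representation restricting faithfully to $Z(G)$) together with a second class drawn from the Green--King presentation. Regularity is a finite check. Multiplication by $x_1$ on $A_0$ and by $x_2$ on $A_0/(x_1)$ are injective exactly when the Poincar\'e series of $A_0/(x_1,x_2)$ equals $(1-t^{|x_1|})(1-t^{|x_2|})P_{A_0}(t)$, which can be read off from the presentation \cite{GreenKing2011,GreenKingDatabase}. I would then prove $\depth A\leq2$ by exhibiting a nonzero homogeneous class $\zeta\in A_0/(x_1,x_2)$ annihilated by $(A_0)_+$. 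Annihilation need only be checked against the thirteen minimal generators, which is a finite Gr\"obner computation. A nonzero socle element shows that $\mathfrak m=(A_0)_+$ is associated to $A_0/(x_1,x_2)$, so no regular sequence of length two extends, and $\depth A_0=2$. Both depth and regular sequences are preserved by the faithfully flat scalar extension to $\kbar$.

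\emph{The bound $\omega_a(A)\geq3$.} Here I would use Corollary~\ref{cor:excess-spectrum}, which says $\omega_a(A)=\min\{r:\varepsilon_r(G;\kbar)=0\}$. Rank zero cannot occur: $\dim A\geq1$ forces $\mathfrak m\notin\Ass A$, since $\depth A>0$. Rank one cannot occur either, because $r\in\Sigma_a$ would force $\depth A\leq r$ by \eqref{eq:standard-ineq}, while $\depth A=2$. So it suffices to show $\varepsilon_2(G;\kbar)>0$, that is, every rank-two elementary abelian $E\leq G$ has $\depth H^*(C_G(E);\kbar)\geq3$.

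To do this, I would enumerate all seventy-five Klein four-subgroups of $G$ directly in the group, sort them into conjugacy classes, and compute the centralizer of each class. This yields the six centralizer isomorphism types. For each type $C$, I would bound the depth from below by $3$. Where possible I would use Duflot's theorem, showing $\rank_2 Z(C)\geq3$. Where that fails, I would fall back on the Green--King depth data for $C$, which is available because $C$ has order at most $64$ or is one of the tabulated groups of order $128$, or else I would exhibit an explicit regular sequence of length three. Since depth is also unchanged under scalar extension, the equality $\varepsilon_2>0$ follows. Then Criterion~\ref{crit:rank-obstruction} gives $\depth A<\omega_a(A)$, hence $\omega_a(A)\geq3$.

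\emph{Main obstacle.} I expect the hardest step to be the centralizer depth certification. The enumeration is mechanical, but any centralizer with small center needs a genuine depth-three certificate rather than the Duflot bound. I would handle those cases by database lookup or by a Poincar\'e-series regularity check of the same kind used in the first part.
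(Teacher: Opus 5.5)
Your proposal is correct and follows the paper's proof. For $\depth A=2$ you use a length-two regular sequence (a Duflot class plus a second presentation class) followed by a nonzero socle element in the quotient. For $\omega_a(A)\geq3$ you use Okuyama's theorem, which you invoke through Corollary~\ref{cor:excess-spectrum}, together with the enumeration of the seventy-five Klein four-subgroups and depth-three certificates for the six centralizer types; these come from Duflot's bound where it applies and from explicit regular sequences otherwise. The only cosmetic difference is that you certify regularity through the Poincar\'e-series identity $P_{A_0/(x_1,x_2)}=(1-t^{|x_1|})(1-t^{|x_2|})P_{A_0}$ rather than the colon-ideal equalities $(I:f_1)=I$ and $(I+(f_1):f_2)=I+(f_1)$; both are valid finite checks for positive-degree homogeneous elements.
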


\subsection{Complete rank-two enumeration}

A rank-two elementary abelian \(2\)-subgroup is a Klein four group.  It is therefore enough to enumerate all pairs of distinct commuting involutions, retain the generated subgroups of order four, and deduplicate by full element sets.  The procedure is exhaustive because any two distinct nonidentity elements of \(C_2\times C_2\) generate the subgroup.

The enumeration gives thirty-one involutions, seventy-five rank-two elementary abelian subgroups, and twenty-one conjugacy orbits.  Their centralizers have the six types shown in Table~\ref{tab:centralizers}.

\begin{table}[ht]
\centering
\caption{Centralizers of all rank-two elementary abelian subgroups.}
\label{tab:centralizers}
\small
\begin{tabular}{@{}cccc@{}}
\toprule
Centralizer & Multiplicity & \(\rank_2 Z(C_G(E))\) & Certificate \\
\midrule
\(\SG{16}{10}\)  & 12 & 3 & Duflot \\
\(\SG{16}{14}\)  & 32 & 4 & Duflot \\
\(\SG{32}{22}\)  & 24 & 3 & Duflot \\
\(\SG{32}{46}\)  & 4  & 3 & Duflot \\
\(\SG{64}{90}\)  & 2  & 2 & length-3 regular sequence \\
\(\SG{64}{216}\) & 1  & 2 & length-3 regular sequence \\
\midrule
Total & 75 & & \\
\bottomrule
\end{tabular}
\end{table}

Duflot's theorem immediately gives depth at least three for the first four types.  For the two centralizers of order $64$, the center rank is only two, so Duflot's bound does not suffice; their depth is checked by explicit regular sequences.

In the notation of Definition~\ref{def:excess}, the table already yields
\begin{equation}
\label{eq:epsilon2}
\varepsilon_2(G)\geq 1.
\end{equation}
In particular, every rank-two subgroup has positive centralizer excess.

\subsection{The ambient depth-two certificate}

The Green--King presentation has thirteen weighted generators \cite{GreenKingDatabase}.  We use the following aliases:
\[
\begin{array}{c|ccccccccccccc}
\text{alias}&a&b&c&d&e&f&g&h&i&j&k&\ell&m\\
\hline
\text{degree}&1&1&1&2&2&2&3&5&5&6&6&7&8.
\end{array}
\]
Thus \(m=c_{8,65}\) is the Duflot regular generator.  Let \(I\) be the forty-four-relation ideal in the corresponding polynomial ring.  Define
\begin{align}
 f_1&=m,\label{eq:f1}\\
 f_2&=cg+c^4+b^4+fbc+f^2+dc^2+d^2.\label{eq:f2}
\end{align}
The exact ideal-quotient calculations are
\begin{equation}
\label{eq:colon-cert}
(I:f_1)=I,
\qquad
\bigl(I+(f_1):f_2\bigr)=I+(f_1).
\end{equation}
Hence \(f_1,f_2\) is a regular sequence.  In the quotient by this sequence, the class
\begin{equation}
\label{eq:socle-witness}
w=de^2+e^3+ah
\end{equation}
is nonzero and is annihilated by every positive-degree algebra generator.  Therefore
\[
0\neq w\in \Soc\bigl(A_0/(f_1,f_2)\bigr),
\]
so the quotient has depth zero.  It follows that
\[
\depth A_0=2.
\]
Faithfully flat base change gives
\[
H^*(G;\kbar)\cong\kbar\otimes_{\F_2}H^*(G;\F_2)
\quad\text{and}\quad
\depth H^*(G;\kbar)=2.
\]

\subsection{The two exceptional centralizers}

For \(C\cong\SG{64}{90}\), the catalogue presentation admits the regular sequence
\[
c_{1,2},\qquad c_{4,21},\qquad
b_{1,1}^2+b_{2,6}+b_{2,5}+b_{2,4}.
\]
For \(C\cong\SG{64}{216}\), a regular sequence is
\[
c_{2,8},\qquad c_{4,25},\qquad
b_{1,3}^2+b_{1,2}b_{1,3}+b_{1,2}^2+b_{1,0}^2.
\]
In each case, three successive colon-ideal equalities verify the non-zero-divisor conditions.  Thus every centralizer in Table~\ref{tab:centralizers} has depth at least three.

\begin{proof}[Proof of Theorem~\ref{thm:counterexample}]
The ambient certificate gives \(\depth A=2\).  If \(A\) had an associated prime of quotient dimension two, Okuyama's theorem would produce a rank-two elementary abelian subgroup \(E\leq G\) satisfying
\[
\depth H^*(C_G(E);\kbar)=2.
\]
The exhaustive enumeration and the six depth certificates show instead that every such centralizer has depth at least three.  Therefore no associated prime has quotient dimension two.  Combining this with \(\depth A\leq\omega_a(A)\) gives \(\omega_a(A)\geq3\), completing the proof.
\end{proof}

\begin{remark}[What the counterexample theorem determines]
The theorem proves the strict gap and, by itself, gives
\[
3\leq\omega_a(A)\leq4.
\]
Section~\ref{sec:stabilization} supplies an explicit rank-three zero-excess subgroup and sharpens this interval to
\[
\omega_a(A)=3.
\]
Thus the original counterexample argument remains the source of the lower bound, while the later centralizer calculation proves that the lower endpoint is attained.
\end{remark}

\section{Depth certificates and element-level annihilators}
\label{sec:witnesses}

The centralizer criterion decides whether an associated prime of a given quotient dimension exists.  It does not single out a convenient cohomology class whose annihilator is that prime.  We keep this distinction in view when comparing the depth certificate with the low-degree classes below.

\subsection{The quotient-socle certificate}

The elements $f_1,f_2$ in \eqref{eq:f1}--\eqref{eq:f2} form a regular sequence, and the nonzero class $w$ in \eqref{eq:socle-witness} lies in
\[
\Soc\bigl(A_0/(f_1,f_2)\bigr).
\]
Hence the quotient has depth zero, and depth in the original ring stops after two regular elements.  The socle class appears only after quotienting by the regular sequence.  Equivalently, it may be expressed through Koszul homology or local cohomology: for the irrelevant maximal ideal $\mathfrak m=(A_0)_+$,
\[
\depth A_0=\min\{i:H^i_{\mathfrak m}(A_0)\neq0\};
\]
see \cite{BrunsHerzog,BrodmannSharp}.  Thus no third positive-degree non-zero-divisor can follow $f_1,f_2$.

\subsection{Prime annihilators in the original ring}

Carlson's equality at depth two would require an element $x\in A$ such that
\[
\ann_A(x)=\mathfrak p,
\qquad
\mathfrak p\text{ prime},
\qquad
\dim A/\mathfrak p=2.
\]
This concerns an element of the original ring, rather than a socle class in a quotient.  The existence of zero divisors, or even of a nonzero class killed by two linearly independent degree-one elements, does not imply that its annihilator is prime or that its cyclic support has dimension two.

\begin{proposition}[Witness distinction]
\label{prop:witness-mismatch}
For $G=\SG{128}{859}$, the depth-two calculation admits a quotient-socle certificate, but no element of $A=H^*(G;\kbar)$ has a prime annihilator of quotient dimension two.
\end{proposition}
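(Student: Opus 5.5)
The proposition has two halves, and I will prove them separately.

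For the positive half, I will simply collect the certificate from Section~\ref{sec:counterexample}. The colon-ideal equalities \eqref{eq:colon-cert} show that $f_1,f_2$ is $A_0$-regular. The class $w$ of \eqref{eq:socle-witness} is nonzero in $A_0/(f_1,f_2)$ and is killed by every positive-degree generator, so $w\in\Soc\bigl(A_0/(f_1,f_2)\bigr)$. Faithfully flat base change to $\kbar$ preserves both the regularity and the nonvanishing of the socle class. So $A$ has a depth-two certificate of quotient-socle type. Nothing new needs to be computed here.

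For the negative half, I argue by contradiction. Suppose some $x\in A$ has $\ann_A(x)=\mathfrak p$ with $\mathfrak p$ prime and $\dim A/\mathfrak p=2$. Then $\mathfrak p\in\Ass A$ by definition, so $2\in\Sigma_a(G;\kbar)$.

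The steps are as follows.
\begin{enumerate}
\item Since $\kbar$ is algebraically closed, Theorem~\ref{thm:okuyama}(i) applies with $s=2$. It gives a rank-two elementary abelian $E\leq G$ with $\mathfrak p=\mathfrak p_E$ and $\depth H^*(C_G(E);\kbar)=2$. Equivalently, by Corollary~\ref{cor:excess-spectrum}, we get $\varepsilon_2(G;\kbar)=0$.
\item The exhaustive enumeration of all seventy-five Klein four subgroups, together with Table~\ref{tab:centralizers}, shows that every centralizer has depth at least three. For four of the types this follows from Duflot's bound. For $\SG{64}{90}$ and $\SG{64}{216}$ it follows from the explicit length-three regular sequences.
\item Step~2 is exactly \eqref{eq:epsilon2}, $\varepsilon_2(G)\geq1$, which contradicts step~1.
\end{enumerate}
Equivalently, the claim is that $2\notin\Sigma_a(G;\kbar)$, which is the content of Theorem~\ref{thm:counterexample}.

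I expect the main obstacle to be step~2, and specifically the two order-$64$ centralizers. There Duflot's bound gives only two, so depth at least three rests on the explicit colon-ideal verifications. The other point needing care is that the depth certificates for the centralizers are computed over $\F_2$, while Okuyama's theorem is applied over $\kbar$. I will handle this by noting that $H^*(C;\kbar)\cong\kbar\otimes_{\F_2}H^*(C;\F_2)$ is faithfully flat, so regular sequences, and hence depth lower bounds, transfer.

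Finally, I will remark on what the proposition does and does not say. The class $w$ lives only in the quotient; it does not lift to an element of $A$ with prime annihilator. This is the distinction the proposition records, and it is illustrated later by $\alpha_0$.
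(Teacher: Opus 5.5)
Your proposal is correct and follows the paper's proof: the first half cites the Section~\ref{sec:counterexample} certificate, and your negative half is the Okuyama-plus-enumeration argument behind Theorem~\ref{thm:counterexample}, which the paper simply invokes rather than unpacking. Your base-change remark from $\F_2$ to $\kbar$ for the centralizer certificates is a detail the paper leaves implicit; one caution on your closing aside, though: $w=de^2+e^3+ah$ is itself an element of $A_0$, and the proposition only shows that no lift of $w$ has a prime annihilator of quotient dimension two, not that no lift has any prime annihilator.
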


\begin{proof}
The quotient-socle statement is the ambient certificate in Section~\ref{sec:counterexample}.  The second statement is equivalent to the absence of associated primes of quotient dimension two, proved in Theorem~\ref{thm:counterexample}.
\end{proof}

There is no second existence obstruction here.  Zero excess at rank $r$ already gives an associated Quillen prime by Okuyama's converse.  What can fail is the expectation that the socle calculation should produce a simple low-degree representative for such an annihilator.

\begin{remark}[Ordinary and local cohomological degrees]
The ordinary group-cohomology space $H^3(G;k)$ is not the local cohomology module $H^3_{\mathfrak m}(A)$.  In the former, the index is the internal group-cohomological degree; in the latter, it is a derived-functor degree used to detect depth.  The degree-three classes studied below illustrate annihilator behavior inside the ordinary cohomology ring, whereas the quotient-socle certificate detects depth homologically.
\end{remark}

\section{The finite group and its elementary-abelian geometry}
\label{sec:group}

The Small Groups identifier and the rank-two centralizer table suffice for the counterexample.  We record a split extension because it gives convenient coordinates for the maximal elementary abelian subgroups.

Let \(F\) be the free group on \(x_1,\dots,x_7\), and let \(G=F/N\) be the presentation reproduced in \cite[Appendix~A.1]{DaiEtAl2026}.  Set
\[
u=x_2x_5,\qquad v=x_4,
\qquad s=x_1,\qquad t=x_3.
\]

\begin{theorem}[Explicit semidirect decomposition]
\label{thm:semidirect}
Let
\[
N_0=\langle u,v\rangle,
\qquad
H_0=\langle s,t\rangle.
\]
Then
\[
N_0\cong C_4\times C_4,
\qquad
H_0\cong C_4\times C_2,
\]
\(N_0\trianglelefteq G\), \(N_0\cap H_0=1\), and \(G=N_0H_0\).  Hence
\[
G\cong(C_4\times C_4)\rtimes_{\rho}(C_4\times C_2).
\]
With respect to the ordered basis \((u,v)\) of \((\Z/4\Z)^2\), and the convention \(n^h=h^{-1}nh\), the action is
\[
\rho(s)=
\begin{pmatrix}
1&2\\
3&3
\end{pmatrix},
\qquad
\rho(t)=
\begin{pmatrix}
1&0\\
2&1
\end{pmatrix}
\quad\text{in }\operatorname{GL}_2(\Z/4\Z).
\]
\end{theorem}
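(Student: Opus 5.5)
The statement is a finite computation inside the presented group $G=F/N$, and I will verify it directly from the relators of \cite[Appendix~A.1]{DaiEtAl2026}. The plan has three steps. First, show that $N_0=\langle u,v\rangle$ is abelian of type $C_4\times C_4$. Second, show that $N_0$ is normal, with $s$ and $t$ acting by the stated matrices. Third, show that $H_0=\langle s,t\rangle\cong C_4\times C_2$ meets $N_0$ trivially. The order count $|N_0H_0|=|N_0|\,|H_0|=16\cdot 8=128=|G|$ then gives $G=N_0H_0$, and the split extension follows formally.

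\textbf{Structure of $N_0$ and $H_0$.} Working in a normal form for $G$ (for instance the polycyclic presentation from the Small Groups library), I compute the following:
\begin{itemize}
\item the orders of $u=x_2x_5$, $v=x_4$, $s=x_1$, $t=x_3$, expecting $4,4,4,2$;
\item the commutators $[u,v]$ and $[s,t]$, both of which should vanish;
\item the absence of further coincidences, namely $u^2\notin\langle v\rangle$ and $s^2\neq t$.
\end{itemize}
These give $N_0\cong C_4\times C_4$ and $H_0\cong C_4\times C_2$. To show $N_0\cap H_0=1$, it suffices to check that no nontrivial element of $H_0$ lies in $N_0$. Since $N_0$ will be normal, one clean route is to check that the images of $s$ and $t$ in $G/N_0$, an abstract group of order $8$, generate a subgroup of order $8$. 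In practice I will simply compare the eight elements of $H_0$ against the sixteen elements of $N_0$ in normal form.

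\textbf{The action.} With the convention $n^h=h^{-1}nh$, I compute the four conjugates
\[
u^s=u\,v^{3},\qquad v^s=u^{2}v^{3},\qquad u^t=u\,v^{2},\qquad v^t=v.
\]
These are read column by column from $\rho(s)$ and $\rho(t)$. Since the generators of $N_0$ are conjugated into $N_0$ by the generators of $G$, normality of $N_0$ follows once the corresponding statement also holds for $x_6$ and $x_7$. These can be expressed in terms of $u,v,s,t$, with $x_2$ and $x_5$ recovered separately, by using the relators.

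As a consistency check, the map $\rho$ must be a homomorphism $C_4\times C_2\to\operatorname{GL}_2(\Z/4\Z)$. Concretely, I verify $\rho(s)^4=I$, $\rho(t)^2=I$, and $\rho(s)\rho(t)=\rho(t)\rho(s)$, which are quick $2\times2$ multiplications mod $4$.

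\textbf{Main obstacle.} The main obstacle is the bookkeeping of rewriting words in $x_1,\dots,x_7$ into normal form, especially confirming that $x_6$ and $x_7$ lie in $N_0H_0$ and that $N_0$ is normal. I would first try to use the power-commutator relations directly. Failing that, I would rely on a computer algebra check in GAP or Magma, which is a routine certificate here.
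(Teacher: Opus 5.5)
Your plan is correct and is essentially the paper's proof. Collection in the presentation gives the orders, the commutations and the trivial intersections, and the four conjugation relations (your $uv^{3}$ is the paper's $uv^{-1}$) show that $H_0$ normalizes $N_0$; the count $|N_0H_0|=|N_0||H_0|=128$ then finishes the argument. The separate check on $x_6,x_7$ that you flag as the main obstacle is unnecessary: once the order count gives $G=N_0H_0$, every element has the form $nh$ with $n\in N_0$ and $h\in H_0$, so it normalizes $N_0$ automatically.
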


\begin{proof}
Collection in the finite presentation gives
\[
|u|=|v|=4,
\qquad uv=vu,
\qquad \langle u\rangle\cap\langle v\rangle=1,
\]
so \(|N_0|=16\) and \(N_0\cong C_4\times C_4\).  The relations
\[
s^2=x_5,\qquad x_5^2=1,
\qquad t^2=1,
\qquad [s,t]=1
\]
show that \(H_0\cong C_4\times C_2\) and \(|H_0|=8\).  Exact enumeration of the presented group gives \(|G|=128\), while
\[
N_0\cap H_0=1.
\]
The conjugation relations are
\[
u^s=uv^{-1},\qquad v^s=u^2v^{-1},
\]
and
\[
u^t=uv^2,\qquad v^t=v.
\]
Thus \(H_0\) normalizes \(N_0\), and the displayed matrices are obtained by reading the exponent vectors of these images as columns.  Since
\(|N_0H_0|=|N_0||H_0|=128\), one has \(G=N_0H_0\).  This proves the split semidirect decomposition.
\end{proof}

\begin{remark}[Meaning of the colon in a structure description]
The GAP-style string
\((C_4\times C_4):(C_4\times C_2)\) indicates a semidirect-product description, but it does not by itself record the action \(\rho\), and structure-description strings are not canonical identifiers.  Theorem~\ref{thm:semidirect} supplies the missing action explicitly.
\end{remark}

\subsection{Maximal versus maximum rank}

The group has three conjugacy classes of maximal elementary abelian subgroups, with ranks
\[
3,4,4.
\]
Accordingly, one class has abstract structure \((C_2)^3\), while two classes have structure \((C_2)^4\).  The rank-three class is maximal under inclusion even though it does not have maximum possible rank.

\begin{proposition}[Explicit maximal elementary abelian representatives]
\label{prop:explicit-maximal-E}
In the semidirect-product coordinates of Theorem~\ref{thm:semidirect}, one may take
\[
E_3=\langle u^2,v^2,us^2\rangle\cong(C_2)^3.
\]
Its conjugate by \(s\) is
\[
E_3^s=\langle u^2,v^2,uvs^2\rangle.
\]
The two rank-four classes are represented by
\[
E_4^{(a)}=\langle u^2,v^2,s^2,t\rangle,
\qquad
E_4^{(b)}=\langle u^2,v^2,vs^2,t\rangle,
\]
with \(E_4^{(a)}\cong E_4^{(b)}\cong(C_2)^4\).  In fact, the complete list of maximal elementary abelian subgroups is
\[
E_3,\quad E_3^s,\quad E_4^{(a)},\quad E_4^{(b)}.
\]
Thus the first two form the single rank-three conjugacy class, while the two rank-four subgroups represent the other two classes.  We write
\[
\{E_{4,1},E_{4,2}\}=\{E_4^{(a)},E_4^{(b)}\}.
\]
The labels \(E_{4,1}\) and \(E_{4,2}\) are ordered as in the Green--King restriction data \cite{GreenKingDatabase}.
\end{proposition}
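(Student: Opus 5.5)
The plan is to work entirely in the coordinates of Theorem~\ref{thm:semidirect}, using the projection $\pi\colon G\to G/N_0\cong H_0=\langle s,t\rangle$. The key preliminary computation is the action of $s^2$. Squaring $\rho(s)$ gives
\[
\rho(s)^2=\begin{pmatrix}7&8\\12&15\end{pmatrix}\equiv\begin{pmatrix}-1&0\\0&-1\end{pmatrix}\pmod 4,
\]
so $s^2$ acts on $N_0$ by inversion. This fact drives both the membership and the counting arguments.

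First I would check that the four displayed sets are elementary abelian of the stated ranks.
\begin{itemize}
\item The elements $u^2,v^2$ generate $\Omega_1(N_0)\cong(C_2)^2$. They are fixed by inversion and by $\rho(t)$, since $u^2\mapsto u^2v^4=u^2$.
\item For every $n\in N_0$ one has $(ns^2)^2=n\,n^{-1}s^4=1$. Hence $us^2$, $uvs^2$, $s^2$ and $vs^2$ are involutions.
\item Inversion fixes $u^2,v^2$, and $us^2$ commutes with them, so $E_3$ is elementary abelian.
\item The element $t$ commutes with $s^2$ because $H_0$ is abelian. It commutes with $v$ because $v^t=v$, and hence with $vs^2$.
\item The ranks follow from independence modulo $N_0$ together with $N_0\cap H_0=1$.
\end{itemize}
Finally, $(us^2)^s=u^s s^2=uv^{-1}s^2$. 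Multiplying by $v^2\in E_3^s$ shows this lies in $\langle u^2,v^2,uvs^2\rangle$, which gives the formula for $E_3^s$.

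Next I would classify all involutions by their image under $\pi$. That image must lie in $\Omega_1(H_0)=\{1,s^2,t,s^2t\}$.
\begin{itemize}
\item The coset $N_0$ contributes the three involutions of $\Omega_1(N_0)$.
\item The coset $N_0s^2$ contributes all sixteen elements $ns^2$.
\item For the cosets $N_0t$ and $N_0s^2t$, the condition $(nh)^2=1$ is equivalent to $n\,n^{h^{-1}}=1$, which is a linear condition on the exponent vector of $n$ modulo $4$. Solving it for $\rho(t)$ and for $-\rho(t)$ should give the remaining twelve involutions, for a total of $31$ in agreement with Section~\ref{sec:counterexample}.
\end{itemize}

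With the involutions listed, I would determine the commutation relation among them. Two elements $n_1s^2$ and $n_2s^2$ commute iff $n_1n_2^{-1}$ has order at most two, that is, iff $n_1\equiv n_2$ modulo $\Omega_1(N_0)$. Commutation with the elements over $t$ and $s^2t$ reduces similarly to congruences modulo $4$. From this graph one reads off the maximal cliques. Each maximal elementary abelian subgroup should contain $\Omega_1(N_0)$, because $\Omega_1(N_0)$ commutes with every involution found. The cliques should turn out to be exactly $E_3$, $E_3^s$, $E_4^{(a)}$ and $E_4^{(b)}$.

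Maximality of $E_3$ amounts to checking that no involution over $t$ or $s^2t$ commutes with $us^2$. This is where the tilted element $us^2$ differs from $s^2$ and $vs^2$.

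For the conjugacy statement: since the list of maximal subgroups is complete and conjugation permutes it preserving rank, each rank-four subgroup is normal. They are distinct, so they form two conjugacy classes. The subgroups $E_3$ and $E_3^s$ form one class by the computation above. Matching $E_{4,1},E_{4,2}$ with the Green--King labels would be done by comparing restriction data, and is a labelling convention rather than part of the proof.

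The main obstacle is completeness: the bookkeeping for the cosets over $t$ and $s^2t$, and the exhaustive clique analysis. I would first attempt the hand congruence computation above, and cross-check it against the machine enumeration of involutions and commuting pairs already used for Table~\ref{tab:centralizers}.
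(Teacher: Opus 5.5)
Your classification is correct, and it is a more transparent route than the paper's. The paper gets completeness from a machine enumeration of the thirty-one involutions; your congruence analysis gives the same result by hand. Your computations check out:
\begin{enumerate}
\item $s^2$ inverts $N_0$.
\item The involutions number $3+16+4+8=31$. Over $t$, the condition $(I+\rho(t))n=0$ forces $n\in\Omega_1(N_0)$. Over $s^2t$, the condition $(I-\rho(t))n=0$ only forces the $u$-exponent to be even.
\item Every involution centralizes $\Omega_1(N_0)$.
\item The cosets $\Omega_1(N_0)us^2$ and $\Omega_1(N_0)uvs^2$ commute with nothing beyond $\Omega_1(N_0)$ and themselves.
\item $\Omega_1(N_0)s^2$ commutes with all four involutions over $t$ and with $\Omega_1(N_0)s^2t$. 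Likewise $\Omega_1(N_0)vs^2$ commutes with all four involutions over $t$ and with $\Omega_1(N_0)vs^2t$.
\end{enumerate}
So the maximal cliques are exactly $E_3,E_3^s,E_4^{(a)},E_4^{(b)}$.

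The gap is in the conjugacy step. From ``conjugation permutes the complete list preserving rank'' you may only conclude that the set $\{E_4^{(a)},E_4^{(b)}\}$ is $G$-stable. That is equally compatible with the two subgroups forming a single conjugacy class of size two. Such an orbit has $2$-power size, so no counting argument excludes it. In that case there would be only two maximal classes, not three, and distinctness of the two subgroups does not rule this out. You need an actual computation, and your framework supplies one.

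Each maximal elementary abelian subgroup meets $N_0s^2$ in a single coset $\Omega_1(N_0)ns^2$. The class $\bar n\in N_0/\Omega_1(N_0)$ is $\bar u$, $\overline{uv}$, $\bar 1$, $\bar v$ for $E_3$, $E_3^s$, $E_4^{(a)}$, $E_4^{(b)}$ respectively. Since $G/N_0$ is abelian, conjugation preserves $N_0s^2$. For $m\in N_0$ one has $m^{-1}ns^2m=nm^{-2}s^2$ with $m^{-2}\in\Omega_1(N_0)$. For $h\in H_0$ one has $h^{-1}ns^2h=n^hs^2$. Hence $G$ acts on the four classes through $\rho \bmod 2$. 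Now $\rho(t)\equiv I$ and $\rho(s)\equiv\begin{pmatrix}1&0\\1&1\end{pmatrix}$, which fixes $\bar 1$ and $\bar v$ and swaps $\bar u$ with $\overline{uv}$. Therefore $E_4^{(a)}$ and $E_4^{(b)}$ are both normal, and $E_3\sim E_3^s$.

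Alternatively, check directly that $u,v,s,t$ normalize $E_4^{(a)}$; the other rank-four subgroup is then forced to be normal as well. The paper's proof only asserts that each rank-four subgroup is fixed, so this is exactly the step your write-up must supply explicitly.
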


\begin{proof}
The action matrices give
\[
(us^2)^2=(vs^2)^2=1,
\]
and the displayed generators commute within each subgroup.  Hence the three displayed groups are elementary abelian of orders \(8,16,16\), respectively.  Exact enumeration of the thirty-one involutions in the semidirect product gives precisely the four maximal elementary abelian subgroups displayed above.  Moreover,
\[
(us^2)^s=uv^{-1}s^2=(uvs^2)v^2,
\]
and conjugation by \(s\) preserves \(\langle u^2,v^2\rangle\).  Hence it interchanges \(E_3\) and \(E_3^s\).  Each rank-four subgroup is fixed as a conjugacy class.  The three maximal conjugacy classes therefore have ranks \(3,4,4\).
\end{proof}

The rank-two subgroups are used in the counterexample proof; the three maximal conjugacy classes are used for the restriction calculation below.

\section{The cohomology ring through degree four}
\label{sec:lowdegree}

We compute the part of the Green--King presentation in degrees at most four \cite{GreenKingDatabase}.  Since the relations are homogeneous, relations of degree at least five do not affect \(A_0^n\) for \(n\leq4\).

\subsection{The truncated presentation}

Use the low-degree aliases
\[
\begin{array}{c|ccccccc}
\text{class}&a&b&c&d&e&f&g\\
\hline
\text{source generator}&a_{1,0}&b_{1,1}&b_{1,2}&b_{2,4}&b_{2,5}&b_{2,6}&b_{3,11}\\
\text{degree}&1&1&1&2&2&2&3.
\end{array}
\]

\begin{proposition}[Exact degree-four truncation]
\label{prop:truncation}
The part of \(A_0\) in degrees at most four is the corresponding truncation of
\[
\F_2[a,b,c,d,e,f,g]/J,
\]
where
\begin{equation}
\label{eq:low-relations}
\begin{aligned}
J=(&a^2,\ ab,\ db+ea,\ eb+db,\ bc^2+fa,\\
   &e^2+de+eac,\ ag+fac,\ bg+fbc).
\end{aligned}
\end{equation}
\end{proposition}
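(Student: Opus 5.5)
The plan is to read the statement off the Green--King presentation of $A_0$ by a homogeneity argument. Write $A_0\cong P/I$, where
\[
P=\F_2[a,b,c,d,e,f,g,h,i,j,k,\ell,m]
\]
is the weighted polynomial ring on the thirteen generators, and $I$ is generated by the forty-four homogeneous relations \cite{GreenKingDatabase}. Because $I$ is homogeneous, for each $n$ we have
\[
A_0^n=P^n/I^n,
\qquad
I^n=\sum_{r}P^{\,n-|r|}\,r,
\]
where $r$ runs over the minimal relations. Only relations with $|r|\le n$ contribute to this sum.

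The steps, in order, are as follows.

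\begin{enumerate}
\item Observe that every generator of degree at least five ($h,i,j,k,\ell,m$) has $P^{\le4}$-component zero in any monomial of degree $\le4$. Hence
\[
P^{\le4}=\F_2[a,\dots,g]^{\le4}.
\]
\item Extract from the forty-four relations those of degree $\le4$. I expect exactly the eight listed in \eqref{eq:low-relations}:
\begin{itemize}
\item $a^2$ and $ab$ in degree two;
\item $db+ea$, $eb+db$ and $bc^2+fa$ in degree three;
\item $e^2+de+eac$, $ag+fac$ and $bg+fbc$ in degree four.
\end{itemize}
We must also check that none of these low-degree relations involves a generator of degree $\ge5$. This is automatic by degree.
\item Relations of degree $\ge5$ lie in $P^{\ge5}$, so they contribute nothing to $I^n$ for $n\le4$. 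This gives $I^n=J^n$ for $n\le4$, with $J\subset\F_2[a,\dots,g]$ as displayed, and hence the identification of truncations.
\end{enumerate}

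As a consistency check, I would compute $\dim(\F_2[a,\dots,g]/J)^n$ for $n\le4$ by the linear-algebra procedure of the appendix. These dimensions should match the Poincaré series coefficients $3,7,12,18$ recorded in the database. For instance, in degree two there are $\binom{4}{2}=6$ quadratic monomials in $a,b,c$ and three new generators $d,e,f$, giving $9$. The two relations $a^2,ab$ bring this down to $7$.

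The main obstacle is transcription rather than mathematics. The database generators are named $a_{1,0},b_{1,1},\dots$, and each relation must be translated correctly into the aliases. Some database relations may also appear in a different but equivalent form; for example, the listed relation may be $eb+db$ where the database records $eb+da$ combined with $db+ea$. So the check should compare ideals degree by degree, not individual relation strings. To do this, I would verify that the $\F_2$-span of $P^{n-|r|}r$ over the database relations equals the span obtained from $J$ in each degree $n\le4$. Comparing dimensions of the two spans against the Poincaré series settles it.
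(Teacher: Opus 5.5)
Your proposal is correct and is essentially the paper's argument. The Green--King relation ideal is homogeneous, so generators and relations of degree at least five cannot affect degrees $\le 4$, and the truncation is read off from the eight source relations of degrees two through four; your Poincar\'e-series cross-check is the same consistency check the paper performs in Theorem~\ref{thm:low-bases}.
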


\begin{proof}
The displayed eight relations are precisely the homogeneous source relations of degrees two, three, and four.  Every remaining minimal relation has degree at least five, so it cannot contribute to the homogeneous quotient in degrees at most four.
\end{proof}

The Green--King database gives the Poincaré series \cite{GreenKingDatabase}:
\[
P_G(t)=
-\frac{t^7-t^6-t^5+t^3-t^2-1}
{(t+1)(t-1)^4(t^2+1)(t^4+1)}.
\]
Its initial expansion is
\begin{equation}
\label{eq:poincare-expansion}
P_G(t)=1+3t+7t^2+12t^3+18t^4+26t^5+37t^6+\cdots.
\end{equation}

\subsection{Explicit bases}

\begin{theorem}[Low-degree bases]
\label{thm:low-bases}
The following are \(\F_2\)-bases of the homogeneous components of \(A_0\):
\begin{align*}
A_0^1={}&\Span\{a,b,c\},\\[1mm]
A_0^2={}&\Span\{ac,b^2,bc,c^2,d,e,f\},\\[1mm]
A_0^3={}&\Span\{g,cf,ce,cd,c^3,bf,b^2c,b^3,af,ae,ad,ac^2\},\\[1mm]
A_0^4={}&\Span\{f^2,ef,df,de,d^2,cg,c^2f,c^2e,c^2d,c^4,\\
&\hspace{21mm}bcf,b^2f,b^3c,b^4,acf,ace,acd,ac^3\}.
\end{align*}
Consequently,
\[
\dim A_0^1=3,
\quad \dim A_0^2=7,
\quad \dim A_0^3=12,
\quad \dim A_0^4=18.
\]
\end{theorem}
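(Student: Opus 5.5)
The plan is to work directly with the truncated presentation of Proposition~\ref{prop:truncation}. We use the quotient $R=\F_2[a,b,c,d,e,f,g]/J$ in degrees at most four, and compare the resulting dimensions with the Poincar\'e series expansion~\eqref{eq:poincare-expansion}. In each degree $n\le 4$ we will list all monomials of weighted degree $n$ in the free algebra. Their numbers are $3,9,19,\dots$ (in degree two: the six quadratic monomials in $a,b,c$ together with $d,e,f$). We then subtract the span of $J_n$, computed as the products of the relations of degree $\le n$ with monomials of complementary degree. Finally we exhibit the displayed monomials as a complement.

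The key device is a monomial order in which the leading terms of the relations are the ``eliminated'' monomials. We choose a term order (say degree-reverse-lexicographic with $a$ and $b$ large) in which the leading terms of the relations are as follows:
\[
\begin{aligned}
&a^2,\quad ab,\quad ea \ (\text{from } db+ea),\quad eb \ (\text{from } eb+db),\quad fa \ (\text{from } bc^2+fa),\\
&e^2,\quad ag,\quad bg.
\end{aligned}
\]
We then verify that the displayed lists are exactly the standard monomials in degrees $1$ through $4$. Concretely:
\begin{itemize}
\item In degree two, the only killed monomials are $a^2$ and $ab$, which leaves $ac,b^2,bc,c^2,d,e,f$.
\item In degree three, we kill the multiples of $a^2$ and $ab$ by $a,b,c$, together with $ea$, $eb$ and $fa$. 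The relation $eb+db$ rewrites $eb$ as $db$; this is why $bd$ and $be$ do not appear separately, and why only $bf$ survives among the products of $b$ with degree-two generators.
\end{itemize}

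The main obstacle is making sure the relations are closed in degree four: S-polynomial reductions among the degree-two and degree-three relations may produce additional degree-four consequences. Examples are $b\cdot(db+ea)$ versus $a\cdot(eb+db)$, and $c\cdot(ag+fac)$ against $g\cdot a$. Rather than run a full Gr\"obner computation by hand, we count to sidestep this. Spanning is immediate from the reductions: every monomial reduces to an $\F_2$-combination of the listed ones, giving $\dim A_0^n\le 3,7,12,18$. The Poincar\'e series of Green--King gives exact dimensions $3,7,12,18$. Hence the spanning sets of the right cardinality are bases. So the real work is only the spanning reduction in degree four, checking monomial by monomial that, for instance:
\begin{itemize}
\item $e^2\mapsto de+eac$;
\item $ae\cdot$ anything and $af\cdot$ anything reduce into the list (note that $ace$ and $acf$ survive, while $a^2$-multiples die);
\item $bd$-type monomials fold via $eb=db$ and $b\cdot ea=\ldots$;
\item $ag\mapsto fac$ and $bg\mapsto fbc$.
\end{itemize}
This leaves exactly the $18$ listed monomials.
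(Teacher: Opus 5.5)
Your overall strategy, spanning by reduction modulo $J$ and then exact dimensions from the Poincar\'e coefficients, is legitimate. It differs from the paper's proof, which row-reduces $J_n\subseteq S_n$ (relying on the exactness of the truncation) and uses the Poincar\'e series only as a cross-check.

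The problem is that the leading terms you chose for the three mixed relations point the wrong way for the \emph{displayed} bases. You take $ae$ as the leading term of $db+ea$ and $af$ as that of $bc^2+fa$. Then $ae$ and $af$ are non-standard, yet both belong to the displayed basis of $A_0^3$. Conversely, $bd$ and $bc^2$ are standard in your order, since nothing in $\{a^2,ab,ae,be,af\}$ divides them, but neither appears in the list. In degree four, $ace$ and $acf$ are divisible by your leading terms, so they do not ``survive''; instead $bcd$ and $bc^3$ do. So your claim that the displayed lists are exactly the standard monomials is false already in degree three. As written, the argument would at best prove that a \emph{different} set is a basis.

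The degree-four count also does not sidestep the S-polynomial issue in your order. The weighted monomials of degree four not divisible by $a^2,ab,ae,be,af,e^2,ag,bg$ number $20$, not $18$; they include $b^2d$ and $b^2c^2$. Division by the eight generators therefore gives only $\dim A_0^4\le 20$, and the Poincar\'e coefficient $18$ then identifies no basis. The two missing relations are exactly the S-pairs you flagged and then set aside: $b(db+ea)+e\cdot ab=b^2d$ and $b(bc^2+fa)+f\cdot ab=b^2c^2$.

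The repair is to reverse the preference. Use weighted degree followed by lex with $b>e>g>d>f>a>c$; its leading terms are $a^2,ab,bd,be,bc^2,e^2,ag,bg$. With this order:
\begin{itemize}
\item the standard monomials in degrees $1$ through $4$ are precisely the displayed lists, of sizes $3,7,12,18$;
\item the only S-pairs of degree at most four reduce to $a^2e$, $abd$ and $a^2f$, hence to zero, so no hidden relation appears;
\item division plus the Poincar\'e coefficients completes the proof (alternatively, this Gr\"obner check plus Proposition~\ref{prop:truncation} does).
\end{itemize}
Alternatively, you can keep your order, add the relations $b^2d$ and $b^2c^2$, and transfer to the displayed lists via the congruences $ae\equiv bd$, $af\equiv bc^2$, $ace\equiv bcd$ and $acf\equiv bc^3$ modulo $J$. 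Finally, a minor slip: there are $20$ weighted monomials in degree three, not $19$.
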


\begin{proof}
For each \(n\leq4\), enumerate all weighted monomials of total cohomological degree \(n\) in \(a,b,c,d,e,f,g\).  Multiply each relation in \eqref{eq:low-relations} by every monomial of complementary degree, and regard the resulting homogeneous polynomials as vectors over \(\F_2\).  Row reduction gives the following counts:
\[
\begin{array}{c|cccc}
n&1&2&3&4\\
\hline
\text{weighted monomials}&3&9&20&42\\
\text{relation-space rank}&0&2&8&24\\
\text{quotient dimension}&3&7&12&18.
\end{array}
\]
Choosing the nonpivot monomials in a fixed graded monomial order gives exactly the displayed bases.  The dimensions agree independently with the coefficients of \eqref{eq:poincare-expansion}.
\end{proof}

\begin{corollary}[Minimal generators through degree four]
\label{cor:minimal-generators}
Modulo products of lower positive-degree classes, the new generator directions are represented by
\[
a,b,c \quad\text{in degree }1,
\qquad
d,e,f \quad\text{in degree }2,
\qquad
g \quad\text{in degree }3.
\]
Every degree-four class is a product of classes of smaller positive degree.  Thus there are three new generators in degree two, one in degree three, and none in degree four.
\end{corollary}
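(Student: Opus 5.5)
The plan is to read Corollary~\ref{cor:minimal-generators} off the explicit bases of Theorem~\ref{thm:low-bases}, using the decomposable subspace
\[
D^n=\sum_{0<i<n}A_0^i\cdot A_0^{n-i}\subseteq A_0^n .
\]
The number of new generators in degree $n$ is $\dim A_0^n-\dim D^n$. A set of classes represents the new directions exactly when their images form a basis of $A_0^n/D^n$.

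In degree one, $D^1=0$, so $a,b,c$ are all new. In degree two, $D^2=(A_0^1)^2$ is spanned by the products of $a,b,c$. Modulo $J$ we have $a^2=ab=0$, so the surviving products are exactly $ac,b^2,bc,c^2$. These are four of the seven basis vectors in Theorem~\ref{thm:low-bases}, so $\dim D^2=4$. The quotient $A_0^2/D^2$ then has basis $d,e,f$, giving three new generators.

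In degree three, $D^3=A_0^1A_0^2$. I will multiply $a,b,c$ by the degree-two basis and reduce using \eqref{eq:low-relations}:
\[
bd=be=ae \pmod J, \qquad bc^2=af \pmod J .
\]
These reductions should show that every basis monomial of $A_0^3$ other than $g$ lies in $D^3$. The one exception needs a check: $b^2c$ is visibly a product, and $ad,ae,af,ac^2$ are products by inspection. Thus $D^3$ is spanned by the eleven non-$g$ basis vectors, and these are linearly independent because they belong to a basis. Hence $\dim D^3=11$ and $g$ spans $A_0^3/D^3$.

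In degree four, I will check each of the eighteen basis elements of $A_0^4$ in turn. Every listed monomial, $f^2,ef,\dots,cg,\dots,ac^3$, has total degree four in generators of degree at most three. So each is a product of two positive-degree classes, either $g$ times a linear class or a product within degrees one and two. Therefore $D^4=A_0^4$, and there are no new degree-four generators.

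The only real obstacle is bookkeeping. I need to confirm that the nonpivot basis monomials chosen in Theorem~\ref{thm:low-bases} are literally products, rather than merely congruent to products. This holds by construction, because the bases consist of monomials in $a,\dots,g$, and any monomial of degree $n$ not equal to a single generator is decomposable. The counts $3,3,1,0$ then follow immediately.
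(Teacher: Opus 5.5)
Your proposal is correct and takes essentially the paper's route, reading the indecomposables off the explicit bases of Theorem~\ref{thm:low-bases}; your explicit computation of $D^2$ and $D^3$ takes the place of the paper's appeal to the minimality of the Green--King generators. One wording fix: in degree three, the containment you actually need is that $D^3$ lies \emph{inside} the span of the eleven non-$g$ basis monomials, so that $g\notin D^3$. Your multiplication table of $a,b,c$ against the degree-two basis proves this, since no relation of degree at most three involves $g$. The containment you state instead, that those eleven monomials lie in $D^3$, is the trivial direction.
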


\begin{proof}
The classes \(a,b,c,d,e,f,g\) are minimal algebra generators in degrees \(1,1,1,2,2,2,3\).  Reducing the bases in Theorem~\ref{thm:low-bases} modulo products of lower positive degrees leaves precisely the displayed representatives, and no minimal generator occurs in degree four.
\end{proof}

Thus $g$ is the only new generator in degree three, and no new generator occurs in degree four.

\section{Low-degree annihilators and cyclic support}
\label{sec:annihilators}

We now choose a representative of the degree-three generator with two explicit linear annihilators.

\subsection{No two-dimensional cyclic support}

For $0\neq x\in A$, the cyclic submodule $Ax$ is isomorphic to $A/\ann_A(x)$, and its support is
\[
\Supp_A(Ax)=V(\ann_A(x)).
\]
The counterexample theorem gives a lower bound for every such cyclic support, whether or not the annihilator is prime.

\begin{theorem}[Cyclic-support lower bound]
\label{thm:cyclic-support}
Let $0\neq x\in A=H^*(G;\kbar)$.  Then
\[
\dim A/\ann_A(x)\geq3.
\]
Equivalently, every nonzero cyclic submodule of $A$ has support dimension at least three.
\end{theorem}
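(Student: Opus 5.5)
The plan is to reduce the statement to the absence of associated primes of small quotient dimension, which is already established in Theorem~\ref{thm:counterexample}. For $0\neq x\in A$, the cyclic module $Ax\cong A/\ann_A(x)$ is a nonzero finitely generated graded $A$-module. Its Krull dimension is therefore
\[
\dim A/\ann_A(x)=\max\{\dim A/\mathfrak q:\mathfrak q\in\Ass_A(Ax)\}.
\]
It suffices to exhibit a single associated prime $\mathfrak q$ of $Ax$ with $\dim A/\mathfrak q\geq3$. In fact I will show that every associated prime of $Ax$ has this property.

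The first step uses $\Ass_A(Ax)\subseteq\Ass_A(A)$. This holds because $Ax$ is a submodule of $A$: a prime of the form $\ann_A(yx)$ with $yx\in Ax$ is an annihilator of an element of $A$. Since $Ax\neq0$ and $A$ is Noetherian (Evens), $\Ass_A(Ax)$ is nonempty. Moreover, in the graded setting its members may be taken to be homogeneous primes, annihilators of homogeneous elements.

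The second step pins down the dimensions. By Theorem~\ref{thm:counterexample}, $\omega_a(A)\geq3$, so every $\mathfrak p\in\Ass A$ satisfies $\dim A/\mathfrak p\geq3$. Any $\mathfrak q\in\Ass_A(Ax)$ therefore gives
\[
\dim A/\ann_A(x)\geq\dim A/\mathfrak q\geq3,
\]
where the first inequality uses $\ann_A(x)\subseteq\mathfrak q$. The equivalent phrasing follows from $\Supp_A(Ax)=V(\ann_A(x))$, whose dimension equals $\dim A/\ann_A(x)$.

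I expect no serious obstacle, since all the content sits in Theorem~\ref{thm:counterexample}. The only points needing care are bookkeeping. First, one should confirm that the associated primes of $A$ over $\kbar$ are exactly the ones governed by Okuyama's theorem. Second, the graded commutative-algebra facts (existence of associated primes, the dimension formula via minimal primes) must be applied to the commutative ring $A$ at $p=2$, where no graded-commutativity subtleties arise.
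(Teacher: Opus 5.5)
Your proof is correct and is essentially the paper's own argument: $\Ass_A(Ax)\subseteq\Ass_A(A)$ because $Ax\subseteq A$, and every prime in $\Ass_A(A)$ has quotient dimension at least three since Theorem~\ref{thm:counterexample} gives $\omega_a(A)\geq3$. You then bound $\dim A/\ann_A(x)$ below by $\dim A/\mathfrak q$ for any $\mathfrak q\in\Ass_A(Ax)$; this needs only $\ann_A(x)\subseteq\mathfrak q$, whereas the paper quotes the max-over-associated-primes formula.
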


\begin{proof}
The inclusion $Ax\subseteq A$ gives
\[
\Ass_A(Ax)\subseteq\Ass_A(A).
\]
Every prime in $\Ass_A(A)$ has quotient dimension at least three by Theorem~\ref{thm:counterexample}.  Since $Ax\cong A/\ann_A(x)$ is nonzero and finitely generated,
\[
\dim A/\ann_A(x)
=\max_{\mathfrak p\in\Ass_A(Ax)}\dim A/\mathfrak p
\geq3.
\]
\end{proof}

\subsection{A degree-three class with two linear annihilators}

Retain the aliases from Proposition~\ref{prop:truncation} and set
\begin{equation}
\label{eq:alpha}
\alpha_0=g+fc\in A_0^3.
\end{equation}

\begin{proposition}[Low-degree annihilation]
\label{prop:alpha}
The class $\alpha_0$ is nonzero and differs from $g$ by a product of lower-degree classes.  Hence it represents the same new degree-three generator modulo decomposables.  Moreover,
\[
a\alpha_0=b\alpha_0=0.
\]
After scalar extension, let $\alpha=1\otimes\alpha_0\in A^3$.  Then
\[
(a,b)\subseteq\ann_A(\alpha),
\qquad
\dim A/\ann_A(\alpha)\geq3,
\qquad
\dim A/(a,b)\geq3.
\]
\end{proposition}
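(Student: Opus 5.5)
The plan is to work entirely in the truncated presentation of Proposition~\ref{prop:truncation}, since every assertion about $\alpha_0$ takes place in degrees at most four, and then to pass to $A$ by flat base change.

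\emph{Nonvanishing and the generator direction.} By Theorem~\ref{thm:low-bases}, both $g$ and $cf$ are basis monomials of $A_0^3$. Hence $\alpha_0=g+fc$ is a sum of two distinct basis vectors and is nonzero. The summand $fc$ is a product of the positive-degree classes $f$ and $c$, so $\alpha_0\equiv g$ modulo decomposables. By Corollary~\ref{cor:minimal-generators}, $\alpha_0$ therefore represents the unique new degree-three generator direction.

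\emph{The two annihilators.} These follow directly from the relations $ag+fac$ and $bg+fbc$ in \eqref{eq:low-relations}. In characteristic two,
\[
a\alpha_0=ag+afc=ag+fac=0,
\qquad
b\alpha_0=bg+bfc=bg+fbc=0
\]
in $A_0^4$, with no further computation needed. Tensoring with $\kbar$ preserves these identities, so $a,b\in\ann_A(\alpha)$ and $(a,b)\subseteq\ann_A(\alpha)$.

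\emph{Dimension bounds.} We have $\alpha\neq0$ because $\kbar\otimes_{\F_2}-$ is faithfully flat, so Theorem~\ref{thm:cyclic-support} gives $\dim A/\ann_A(\alpha)\geq3$. The inclusion $(a,b)\subseteq\ann_A(\alpha)$ yields a surjection $A/(a,b)\twoheadrightarrow A/\ann_A(\alpha)$, and hence $\dim A/(a,b)\geq\dim A/\ann_A(\alpha)\geq3$.

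The only point requiring care is making sure the relations are read exactly as listed, namely $ag+fac$ and $bg+fbc$ rather than some variant. If they are, the proposition is immediate. The substantive input is Theorem~\ref{thm:cyclic-support}, which rests on the counterexample theorem. I therefore do not expect a real obstacle; the sharper value $4$ is deferred to the later restriction argument.
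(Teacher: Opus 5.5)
Your proposal is correct and follows the paper's proof essentially step for step. The annihilation comes from the relations $ag+fac$ and $bg+fbc$, the bound $\dim A/\ann_A(\alpha)\geq 3$ comes from faithful flatness together with Theorem~\ref{thm:cyclic-support}, and the bound for $A/(a,b)$ comes from the induced surjection.

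The only difference is cosmetic. The paper deduces $\alpha_0\neq0$ from $\alpha_0\equiv g\neq0$ modulo decomposables (Corollary~\ref{cor:minimal-generators}). You instead read it off from $g$ and $cf$ being distinct basis monomials of $A_0^3$ in Theorem~\ref{thm:low-bases}. Both arguments are valid.
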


\begin{proof}
The class $fc$ is a product of lower-degree classes, whereas Corollary~\ref{cor:minimal-generators} shows that $g$ represents the unique nonzero new degree-three generator modulo such products.  Hence $\alpha_0=g+fc$ has the same nonzero class modulo decomposables as $g$, proving $\alpha_0\neq0$.  The relations
\[
ag+fac=0,
\qquad
bg+fbc=0
\]
from \eqref{eq:low-relations} give
\[
a(g+fc)=0,
\qquad
b(g+fc)=0.
\]
Faithful flatness preserves the nonvanishing of $\alpha_0$, so Theorem~\ref{thm:cyclic-support} applies to $\alpha$.  Finally, $(a,b)\subseteq\ann_A(\alpha)$ gives a surjection
\[
A/(a,b)\twoheadrightarrow A/\ann_A(\alpha),
\]
and therefore $\dim A/(a,b)\geq3$.
\end{proof}

The equations $a\alpha_0=b\alpha_0=0$ do not determine the height or radical of $\ann(\alpha_0)$.  The restriction calculation in Section~\ref{sec:detection} shows that the quotient by this annihilator has dimension four.
\section{Restriction to maximal elementary abelian subgroups}
\label{sec:detection}

For an elementary abelian \(2\)-group \(E\cong(C_2)^r\),
\[
H^*(E;\F_2)\cong\F_2[x_1,\dots,x_r],
\qquad |x_i|=1.
\]
We now use the three maximal conjugacy classes
\[
E_3,\qquad E_{4,1},\qquad E_{4,2}
\]
from Section~\ref{sec:group}.

Choose coordinates
\[
H^*(E_3;\F_2)=\F_2[x_0,x_1,x_2],
\]
and
\[
H^*(E_{4,i};\F_2)=\F_2[x_0,x_1,x_2,x_3]
\quad(i=1,2).
\]
Table~\ref{tab:restrictions} is taken from the Green--King database \cite{GreenKingDatabase}.  We retain its polynomial coordinates; they are not identified with the dual basis of the generators in Proposition~\ref{prop:explicit-maximal-E}.  The coordinate \(x_0\) does not occur in the restrictions shown here.

Write
\begin{align}
P_1&=x_2x_3^2+x_2^2x_3+x_1x_3^2+x_1x_2x_3,\label{eq:P1}\\
P_2&=x_2x_3^2+x_2^2x_3+x_1x_3^2+x_1x_2x_3+x_1x_2^2.\label{eq:P2}
\end{align}
\begin{table}[ht]
\centering
\caption{Restrictions of the low-degree generators.}
\label{tab:restrictions}
\small
\renewcommand{\arraystretch}{1.2}
\begin{tabular}{@{}cccc@{}}
\toprule
Class & \(E_3\) & \(E_{4,1}\) & \(E_{4,2}\) \\
\midrule
\(a\) & \(0\) & \(0\) & \(0\) \\
\(b\) & \(x_1\) & \(0\) & \(0\) \\
\(c\) & \(0\) & \(x_1\) & \(x_1\) \\
\(d\) & \(0\) & \(x_2^2\) & \(x_3^2\) \\
\(e\) & \(0\) & \(0\) & \(x_3^2\) \\
\(f\) & \(x_2^2+x_1x_2\) & \(x_3^2+x_2x_3\) & \(x_2x_3+x_2^2+x_1x_3\) \\
\(g\) & \(0\) & \(P_1\) & \(P_2\) \\
\bottomrule
\end{tabular}
\end{table}

\subsection{Ranks and nilpotent kernels}

\begin{theorem}[Low-degree detection profile]
\label{thm:detection-profile}
The restriction ranks are
\[
\begin{array}{c|cccc}
& E_3&E_{4,1}&E_{4,2}&\text{joint restriction}\\
\hline
H^2(G;\F_2)&2&3&3&5\\
H^3(G;\F_2)&2&4&4&7\\
H^4(G;\F_2)&3&7&7&12.
\end{array}
\]
The joint kernels are
\begin{align}
\ker \Res^2_{\max}
   & =\Span\{bc,ac\},\label{eq:nil2}\\
\ker \Res^3_{\max}
   & =\Span\{b^2c,af,ae,ad,ac^2\},\label{eq:nil3}\\
\ker \Res^4_{\max}
   & =\Span\{bcf,b^3c,acf,ace,acd,ac^3\}.
   \label{eq:nil4}
\end{align}
They are exactly
\[
\Nrad\cap A_0^2,
\qquad
\Nrad\cap A_0^3,
\qquad
\Nrad\cap A_0^4,
\]
respectively.
\end{theorem}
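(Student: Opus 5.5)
The proof is a finite linear-algebra computation, carried out degree by degree using the bases of Theorem~\ref{thm:low-bases} and the restriction data of Table~\ref{tab:restrictions}. For each $n\in\{2,3,4\}$ and each maximal class $E\in\{E_3,E_{4,1},E_{4,2}\}$, I will restrict every basis monomial of $A_0^n$ multiplicatively. Restriction is a ring homomorphism, so, for example,
\[
\res_{E_{4,2}}(cf)=x_1(x_2x_3+x_2^2+x_1x_3).
\]
I will then write each image in the monomial basis of the degree-$n$ part of $\F_2[x_0,\dots,x_r]$. This gives a matrix of size $\dim H^n(E)\times\dim A_0^n$ over $\F_2$, namely $7,12,18$ columns in degrees $2,3,4$. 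Row reduction of each matrix gives the individual ranks, and row reduction of the stacked matrix gives the joint rank. Kernel vectors of the stacked matrix give the joint kernel.

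Before row reducing, I will sort out the obvious kernel elements. Every monomial containing $a$ restricts to zero, since $a\mapsto 0$ on all three classes. Monomials involving $bc$ also vanish: $b$ is zero on both $E_{4,i}$ and $c$ is zero on $E_3$. This already produces the displayed spans, for instance $\{bc,ac\}$ in degree two and $\{b^2c,af,ae,ad,ac^2\}$ in degree three. It then remains to show that the complementary monomials (five in degree two, seven in degree three, twelve in degree four) have linearly independent joint images. Dimension counting then yields $\dim\ker = \dim A_0^n-\text{rank}$ with equality of the spans, namely $7-5=2$, $12-7=5$ and $18-12=6$. The independence check is the only real computation, and it is a rank computation I expect to do explicitly.

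For the nilradical identification, I will invoke \eqref{eq:nil-kernel}, which follows from Quillen's $F$-isomorphism theorem: the kernel of restriction to all maximal elementary abelian subgroups equals the nilradical in each degree. I will check one point carefully. The subgroups $E_3$ and $E_3^s$ are conjugate, so one representative per conjugacy class suffices; this is justified by Proposition~\ref{prop:explicit-maximal-E}. As a sanity check, every kernel element should be directly nilpotent. This holds because $a^2=0$, and $(bc)^2=b^2c^2=0$ since $bc^2=fa$ gives $b^2c^2=abf=0$. This independently confirms the inclusion of the kernel in the nilradical.

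The main obstacle is the degree-four joint rank of $12$. The individual ranks on $E_{4,1}$ and $E_{4,2}$ (both $7$) must be computed with the polynomials $P_1$ and $P_2$ entering through $cg$. I must also verify that the three images together still have rank exactly $12$, with no hidden further relation among the twelve remaining monomials. I would first handle this by separating the monomials according to whether they survive on $E_3$ (those in $b,f$ only, such as $f^2$, $b^2f$, $b^4$), and then doing the reduction for the rest in the $E_{4,i}$ coordinates.
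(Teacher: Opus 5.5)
Your proposal is correct and is essentially the paper's proof: substitute the images from Table~\ref{tab:restrictions} into the bases of Theorem~\ref{thm:low-bases} and row-reduce over $\F_2$; your triangular elimination $E_3\to E_{4,1}\to E_{4,2}$ on the complementary $5,7,12$ monomials does give joint ranks $5,7,12$. Then identify the joint kernel with the nilradical via \eqref{eq:nil-kernel} together with conjugacy invariance. Your pre-sorting of the monomials divisible by $a$ or $bc$ and your direct check that the spanning classes are nilpotent are sound refinements; combined with the trivial fact that nilpotents restrict to zero in a polynomial ring, that check even makes the nilradical identification in degrees $2$--$4$ independent of Quillen's theorem.
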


\begin{proof}
Substitute the images in Table~\ref{tab:restrictions} into the bases of Theorem~\ref{thm:low-bases}, expand in the standard monomial bases of the target polynomial rings, and row-reduce over \(\F_2\).  This gives the displayed ranks and kernels.  Quillen's detection theorem identifies the kernel of restriction to all elementary abelian subgroups with the nilradical.  Every elementary abelian subgroup is contained in a maximal one, so the product over the three maximal conjugacy-class representatives has the same kernel.  Hence \eqref{eq:nil2}--\eqref{eq:nil4} are precisely the indicated nilpotent pieces.
\end{proof}

Thus the detected quotients have dimensions
\[
\dim A_0^2/(\Nrad\cap A_0^2)=5,
\]
\[
\dim A_0^3/(\Nrad\cap A_0^3)=7,
\]
\[
\dim A_0^4/(\Nrad\cap A_0^4)=12.
\]

\subsection{Full-dimensional support of \texorpdfstring{$\alpha_0$}{alpha0}}

\begin{lemma}[Rank-four detection of $\alpha_0$]
\label{lem:alpha-rank-four}
The class
\[
\alpha_0=g+fc\in H^3(G;\F_2)
\]
restricts nontrivially to the rank-four subgroup $E_{4,1}$.  More precisely,
\[
\res^G_{E_{4,1}}(\alpha_0)=x_2x_3(x_2+x_3)\neq0.
\]
\end{lemma}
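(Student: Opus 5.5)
The lemma follows by direct substitution into Table~\ref{tab:restrictions}, using that $\res^G_{E_{4,1}}$ is a ring homomorphism. Write $\res=\res^G_{E_{4,1}}$. Then
\[
\res(\alpha_0)=\res(g)+\res(f)\res(c),
\]
and the table gives $\res(g)=P_1$, $\res(f)=x_3^2+x_2x_3$ and $\res(c)=x_1$. So the plan is to compute
\[
P_1+x_1(x_3^2+x_2x_3)
\]
in $\F_2[x_0,x_1,x_2,x_3]$.

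Expanding with \eqref{eq:P1}, the product contributes $x_1x_3^2+x_1x_2x_3$. These are exactly the two terms of $P_1$ that involve $x_1$, so they cancel in characteristic two. What remains is
\[
x_2x_3^2+x_2^2x_3=x_2x_3(x_2+x_3).
\]

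For the nonvanishing, note that $H^*(E_{4,1};\F_2)$ is a polynomial ring and hence a domain. Each factor $x_2$, $x_3$, $x_2+x_3$ is a nonzero linear form, so their product is nonzero. Equivalently, $x_2x_3^2$ and $x_2^2x_3$ are distinct monomials.

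There is no real obstacle here. The only care needed is to use the table's $E_{4,1}$ column (not $E_{4,2}$, where $P_2$ carries the extra term $x_1x_2^2$) and to respect the table's own coordinates, which the paper warns are not the dual basis of the generators in Proposition~\ref{prop:explicit-maximal-E}. The correctness of the lemma rests on the accuracy of the Green--King restriction data as quoted.

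For later use, $\alpha_0$ has nonzero image in a domain of Krull dimension four. Hence $\ann(\alpha_0)$ lies in the Quillen prime $\mathfrak p_{E_{4,1}}$, which gives the dimension-four assertion of Theorem~\ref{thm:intro-synthesis}(d).
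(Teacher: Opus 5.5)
Your proposal is correct and matches the paper's proof: substitute the $E_{4,1}$ column of Table~\ref{tab:restrictions}, note that $x_1(x_3^2+x_2x_3)$ cancels the two $x_1$-terms of $P_1$ over $\F_2$, and conclude that $x_2x_3(x_2+x_3)$ is nonzero in the polynomial ring. Your closing remark that $\ann(\alpha_0)\subseteq\mathfrak p_{E_{4,1}}$ is also exactly how the paper then derives Corollary~\ref{cor:full-dimensional-support}.
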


\begin{proof}
From Table~\ref{tab:restrictions},
\[
\res^G_{E_{4,1}}(c)=x_1,
\qquad
\res^G_{E_{4,1}}(f)=x_3^2+x_2x_3,
\qquad
\res^G_{E_{4,1}}(g)=P_1.
\]
Therefore
\begin{align*}
\res^G_{E_{4,1}}(\alpha_0)
&=P_1+x_1(x_3^2+x_2x_3)\\
&=x_2x_3^2+x_2^2x_3\\
&=x_2x_3(x_2+x_3),
\end{align*}
which is nonzero in the polynomial ring $H^*(E_{4,1};\F_2)$.
\end{proof}

\begin{corollary}[Full-dimensional cyclic support]
\label{cor:full-dimensional-support}
Let
\[
\alpha=1\otimes\alpha_0\in A^3.
\]
Then
\[
\dim A_0/\ann_{A_0}(\alpha_0)=4,
\qquad
\dim A/\ann_A(\alpha)=4.
\]
Moreover,
\[
\dim A_0/(a,b)=4,
\qquad
\dim A/(a,b)=4.
\]
\end{corollary}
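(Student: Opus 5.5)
The plan is to squeeze $\dim A/\ann_A(\alpha)$ between the Krull dimension of $A$ from above and a rank-four Quillen prime from below. The upper bound is immediate: Table~\ref{tab:basic-data} and Quillen's theorem give $\dim A=\rank_2(G)=4$, so every quotient of $A$ has dimension at most four. The work is therefore in the lower bound, and the key input is Lemma~\ref{lem:alpha-rank-four}.

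For the lower bound, take $E=E_{4,1}$ and its Quillen prime $\mathfrak p_E=(\res^G_E)^{-1}(0)$. Since $H^*(E;\kbar)=\kbar[x_0,\dots,x_3]$ is a domain, this kernel is prime and $\dim A/\mathfrak p_E=4$. I will show $\ann_A(\alpha)\subseteq\mathfrak p_E$. Let $y\in\ann_A(\alpha)$. Because restriction is a ring homomorphism,
\[
\res(y)\,\res(\alpha)=\res(y\alpha)=0 .
\]
By Lemma~\ref{lem:alpha-rank-four}, after scalar extension $\res(\alpha)=x_2x_3(x_2+x_3)\neq0$. In a domain this forces $\res(y)=0$, that is, $y\in\mathfrak p_E$. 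Hence $A/\ann_A(\alpha)$ surjects onto $A/\mathfrak p_E$, which gives $\dim A/\ann_A(\alpha)\geq4$, and so equality holds.

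The same argument works verbatim over $\F_2$ with $A_0$, since $H^*(E;\F_2)$ is also a polynomial domain and $\dim A_0=4$. Alternatively, the $\F_2$ statement follows by faithfully flat base change: $A/\ann_A(\alpha)\cong\kbar\otimes A_0/\ann_{A_0}(\alpha_0)$, because annihilators commute with flat extension for finitely generated ideals, and Krull dimension is unchanged by extending scalars for finitely generated graded algebras.

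For the ideal $(a,b)$, Proposition~\ref{prop:alpha} gives $(a,b)\subseteq\ann(\alpha)$. There is then a surjection $A/(a,b)\twoheadrightarrow A/\ann_A(\alpha)$, so $\dim A/(a,b)\geq4$, and the upper bound $4$ again comes from $\dim A=4$. A more direct route is also available: Table~\ref{tab:restrictions} shows that $a$ and $b$ restrict to $0$ on $E_{4,1}$, so $(a,b)\subseteq\mathfrak p_{E_{4,1}}$.

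None of these steps is a serious obstacle. The only points needing care are the nonvanishing of $\res(\alpha)$, which is Lemma~\ref{lem:alpha-rank-four}, and the compatibility of annihilators and dimension with base change when passing between $A_0$ and $A$. Using the domain argument separately over each field sidesteps the base-change issue entirely.
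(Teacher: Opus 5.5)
Your proposal is correct and follows essentially the same route as the paper. You restrict to the rank-four subgroup $E_{4,1}$, use Lemma~\ref{lem:alpha-rank-four} and the fact that $H^*(E_{4,1})$ is a domain to place $\ann(\alpha)$ inside the Quillen prime $\mathfrak p_{E_{4,1}}$ of coheight four, squeeze against the ambient Krull dimension four, and deduce the $(a,b)$ statement from $(a,b)\subseteq\ann(\alpha)$. The differences are cosmetic: you argue over $\kbar$ first, whereas the paper works over $\F_2$ and then uses faithfully flat base change. You also add the equally valid shortcut that $a$ and $b$ already restrict to zero on $E_{4,1}$ by Table~\ref{tab:restrictions}.
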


\begin{proof}
Take $E=E_{4,1}$.  Lemma~\ref{lem:alpha-rank-four} gives
\[
\res^G_E(\alpha_0)\neq0.
\]
Since
\[
H^*(E;\F_2)\cong\F_2[x_0,x_1,x_2,x_3]
\]
is a domain, every $y\in\ann_{A_0}(\alpha_0)$ satisfies
\[
\res^G_E(y)\,\res^G_E(\alpha_0)=0
\quad\Longrightarrow\quad
\res^G_E(y)=0.
\]
Hence
\[
\ann_{A_0}(\alpha_0)\subseteq\ker(\res^G_E)=\mathfrak p_E.
\]
Quillen's dimension theorem gives
\[
\dim A_0/\mathfrak p_E=\rank(E)=4.
\]
Therefore
\[
4\leq\dim A_0/\ann_{A_0}(\alpha_0)\leq\dim A_0=4.
\]
This proves the first equality.  Faithfully flat scalar extension gives the second.  Finally, Proposition~\ref{prop:alpha} gives
\[
(a,b)\subseteq\ann(\alpha_0)
\]
over $\F_2$ and after scalar extension.  The induced quotient maps show that both $A_0/(a,b)$ and $A/(a,b)$ have dimension at least four, and neither can exceed the ambient Krull dimension four.
\end{proof}

Thus $\alpha_0$ has two degree-one annihilators, but its cyclic support has the full ambient dimension.  We make no claim that this support equals the whole cohomological spectrum as a closed subset.

\section{Extension-theoretic meaning of the low-degree groups}
\label{sec:meaning}

For reference, we record the standard extension-theoretic interpretation of the first three cohomological degrees and relate it to the generators found above.

\subsection{Degree one: characters and index-two subgroups}

For the trivial \(G\)-module \(\F_2\),
\[
H^1(G;\F_2)\cong\Hom(G,\F_2)
              \cong\Hom(G,C_2).
\]
Thus \(a,b,c\) are three independent mod-two characters.  Equivalently,
\[
G/\Phi(G)\cong(C_2)^3,
\]
which agrees with the fact that \(G\) has three minimal group generators.  Each nonzero degree-one class determines an index-two normal subgroup as its kernel.

\subsection{Degree two: central extensions}

For a prescribed trivial action of \(G\) on \(C_2\), the group \(H^2(G;\F_2)\) classifies equivalence classes of central extensions
\begin{equation}
\label{eq:central-extension}
1\longrightarrow C_2\longrightarrow \widetilde G
\longrightarrow G\longrightarrow1;
\end{equation}
see, for example, \cite[Chapter~IV]{Brown1982} and \cite{AdemMilgram}.  Since
\[
H^2(G;\F_2)\cong(\F_2)^7,
\]
there are \(2^7\) cohomology classes of such extensions with the coefficient subgroup and quotient identifications fixed.  Different classes may still yield isomorphic abstract middle groups, so this is not a count of isomorphism types of groups \(\widetilde G\).

The subspace generated by degree-one cup products is
\[
\Span\{ac,b^2,bc,c^2\}.
\]
The four displayed classes are built from pairs of mod-two characters.  The three classes \(d,e,f\), by contrast, represent the new central-extension directions not generated by degree-one cup products.

The nilpotent part of degree two is
\[
\Nrad\cap H^2(G;\F_2)=\Span\{ac,bc\}.
\]
These extension classes vanish after restriction to every elementary abelian subgroup.  This is a detection statement, not a claim that the associated extensions are split over every subgroup of \(G\); it says precisely that their cohomology classes restrict trivially on all elementary abelian \(2\)-subgroups.

\subsection{Degree three: crossed extensions and coherent \texorpdfstring{\(2\)}{2}-groups}

The group \(H^3(G;M)\) admits several equivalent interpretations.  In classical group cohomology it classifies crossed extensions with kernel \(M\); in higher algebra it classifies coherent \(2\)-groups with prescribed \(\pi_0=G\), \(\pi_1=M\), and action of \(G\) on \(M\) \cite{Huebschmann1980,BaezLauda2004}.  For the trivial module \(M=\F_2\), the present example gives
\[
H^3(G;\F_2)\cong(\F_2)^{12}.
\]

Eleven dimensions are products of lower-degree classes, and exactly one dimension is new.  Thus \(g=b_{3,11}\) is the unique minimal degree-three generator modulo products of lower positive degrees.  Table~\ref{tab:restrictions} shows that \(g\) vanishes on the rank-three maximal class and is nonzero on both rank-four classes.

The representative $\alpha_0=g+fc$ differs from $g$ by a product of lower-degree classes and is killed by the two degree-one classes $a$ and $b$.  Lemma~\ref{lem:alpha-rank-four} and Corollary~\ref{cor:full-dimensional-support} show that this visible annihilation nevertheless leaves its cyclic support full-dimensional.

The nilpotent part is
\[
\Nrad\cap H^3(G;\F_2)
=\Span\{b^2c,af,ae,ad,ac^2\}.
\]
The quotient by this five-dimensional space is seven-dimensional.  Six of those detected dimensions are decomposable, while the seventh is the new generator direction represented by \(g\).

\subsection{Degree four: no new ring generator}

Algebraically,
\[
H^4(G;\F_2)=\Ext^4_{\F_2G}(\F_2,\F_2)
\]
and may be interpreted through Yoneda four-fold extensions of modules.  In the present ring every degree-four class is a cup product of lower-degree classes, so there is no new minimal generator in degree four.

The degree-four nilpotent subspace is six-dimensional:
\[
\Nrad\cap H^4(G;\F_2)
=\Span\{bcf,b^3c,acf,ace,acd,ac^3\}.
\]
The remaining twelve dimensions are detected on maximal elementary abelian subgroups.  The multiplication of the unique degree-three generator by \(H^1\) illustrates the distinction: \(ag\) and \(bg\) are nilpotent, while \(cg\) survives elementary abelian detection.

\section{The centralizer criterion and elementary-abelian stabilization}
\label{sec:stabilization}

We next prove that positive excess at the ambient depth is preserved by taking a direct product with an elementary abelian $2$-group.  We then apply the result to $G$, determine $\omega_a(A)$ exactly, and sharpen the stabilized family.

\subsection{The exact local criterion}

By Theorem~\ref{thm:spectra-equality},
\[
\Sigma_a(K;k)=\Sigma_c(K;k).
\]
Hence
\[
\omega_a\bigl(H^*(K;k)\bigr)
=\min\{r:\varepsilon_r(K;k)=0\}.
\]
In particular, if
\[
d=\depth H^*(K;k),
\]
then Carlson's equality holds if and only if
\[
\varepsilon_d(K;k)=0.
\]
Thus the order-$128$ example is completely explained at the existence level by
\[
\varepsilon_2(G;\kbar)>0.
\]
There is no rank-two elementary abelian subgroup whose centralizer cohomology has depth two, and therefore there is no associated prime of quotient dimension two.

If $\varepsilon_r(K;k)=0$, Okuyama's converse already supplies an associated Quillen prime.  Determining a simple element with that annihilator, its degree, or the primary decomposition of a natural annihilator is a separate element-level problem.

\subsection{Elementary-abelian stabilization and an infinite family}

For $n\geq0$, write
\[
V_n=(C_2)^n.
\]
If $K$ is a finite group and $k$ is a field of characteristic $2$, the K\"unneth isomorphism and the standard computation of elementary abelian cohomology give
\begin{equation}
\label{eq:kunneth-polynomial}
\begin{aligned}
H^*(K\times V_n;k)
&\cong H^*(K;k)\otimes_kH^*(V_n;k)\\
&\cong H^*(K;k)[u_1,\dots,u_n],
\qquad |u_i|=1.
\end{aligned}
\end{equation}
See, for example, \cite[Chapter~V]{Brown1982}.

\begin{lemma}[Depth shift]
\label{lem:depth-shift}
Let $K$ be a finite group and let $k$ be a field of characteristic $2$.  Then
\[
\depth H^*(K\times V_n;k)=\depth H^*(K;k)+n.
\]
\end{lemma}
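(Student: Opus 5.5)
By the Künneth isomorphism \eqref{eq:kunneth-polynomial}, $H^*(K\times V_n;k)\cong B[u_1,\dots,u_n]$ with $B=H^*(K;k)$ and $|u_i|=1$. I would argue by induction on $n$, so it suffices to prove $\depth B[u]=\depth B+1$ for a single polynomial variable of degree one. Throughout, depth means grade of the irrelevant ideal. Since these are positively graded Noetherian $k$-algebras with $B^0=k$, this equals the depth of the localization at the homogeneous maximal ideal, and also equals $\min\{i:H^i_{\mathfrak m}(-)\neq0\}$.

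\textbf{Lower bound.} Let $x_1,\dots,x_d\in B_+$ be a homogeneous $B$-regular sequence of length $d=\depth B$. Since $B[u]$ is free over $B$, hence flat, the sequence $x_1,\dots,x_d$ stays regular on $B[u]$, with quotient $(B/(x_1,\dots,x_d))[u]$. On this quotient, $u$ is a non-zero-divisor, because multiplication by $u$ is injective on any polynomial ring over a nonzero ring. The final quotient $B/(x_1,\dots,x_d)$ is nonzero, which gives a regular sequence of length $d+1$ in $B[u]_+$.

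\textbf{Upper bound.} This is the step that needs care, since a maximal regular sequence in $B[u]$ need not extend the one in $B$. I would show that $B'=B[u]/(u)\cong B$ has depth $\depth B[u]-1$. For a homogeneous non-zero-divisor $u$ in the maximal homogeneous ideal $\mathfrak m'$ of a graded Noetherian ring $R$, one has $\depth R/(u)=\depth R-1$. This holds because depth equals the length of a maximal homogeneous regular sequence in $\mathfrak m'$, all maximal ones have the same length, and the graded version of the standard fact that one may begin such a sequence with any given non-zero-divisor of $\mathfrak m'$ applies; see \cite{BrunsHerzog}. Applying this with $R=B[u]$ and the non-zero-divisor $u$ gives $\depth B[u]=\depth B+1$.

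Alternatively, Ext over the graded local ring gives the same conclusion. It is also worth noting that depth computed over $B[u]$ for the $B[u]$-module $B[u]/(u)$ agrees with depth of $B$ over itself, since the relevant maximal ideal restricts to $\mathfrak m_B$.

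\textbf{Conclusion.} Iterating the single-variable step $n$ times yields $\depth H^*(K\times V_n;k)=\depth H^*(K;k)+n$. The main point to check is that the ungraded local facts transfer to the $*$-local graded setting. This is standard: localize at the homogeneous maximal ideal, which is the unique graded maximal ideal, and use that depth is unchanged under this localization.
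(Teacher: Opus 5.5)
Your proposal is correct and follows the paper's route: the K\"unneth isomorphism reduces the question to a polynomial extension, the degree-one variables form a homogeneous regular sequence with quotient $H^*(K;k)$, and applying $\depth R/(u)=\depth R-1$ for a homogeneous non-zero-divisor $u$ $n$ times gives the formula. Your separate flatness-based lower bound is harmless but redundant, since that depth formula already gives the equality.
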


\begin{proof}
Under \eqref{eq:kunneth-polynomial}, the variables $u_1,\dots,u_n$ form a regular sequence, and quotienting by them gives $H^*(K;k)$.  Successive use of the depth formula for a homogeneous non-zero-divisor yields the result.
\end{proof}

\begin{theorem}[Elementary-abelian stabilization]
\label{thm:stabilization}
Let $K$ be a finite group, let $k$ be an algebraically closed field of characteristic $2$, and put
\[
d=\depth H^*(K;k).
\]
Assume
\[
\varepsilon_d(K;k)>0.
\]
For every $n\geq0$, set $K_n=K\times V_n$.  Then
\[
\depth H^*(K_n;k)=d+n
\]
and
\[
\varepsilon_{d+n}(K_n;k)>0.
\]
Consequently, every $K_n$ violates Carlson's associated-prime depth equality.  More precisely,
\[
\omega_a\bigl(H^*(K_n;k)\bigr)\geq d+n+1.
\]
\end{theorem}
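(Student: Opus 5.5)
The depth equality $\depth H^*(K_n;k)=d+n$ is exactly Lemma~\ref{lem:depth-shift}, so the substance is the excess statement. By Criterion~\ref{crit:rank-obstruction}, $\varepsilon_{d+n}(K_n;k)>0$ is equivalent to $d+n<\omega_a(H^*(K_n;k))$. I will therefore prove the sharper claim $\omega_a(H^*(K_n;k))=\omega_a(H^*(K;k))+n$ and combine it with $\omega_a(H^*(K;k))\geq d+1$. The latter inequality is the hypothesis $\varepsilon_d(K;k)>0$ read through Criterion~\ref{crit:rank-obstruction}, together with \eqref{eq:standard-ineq}. Granting the claim, we get $\omega_a(H^*(K_n;k))\geq d+n+1>d+n=\depth H^*(K_n;k)$. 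Criterion~\ref{crit:rank-obstruction} applied to $K_n$ then gives $\varepsilon_{d+n}(K_n;k)>0$ and the Carlson violation.

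To compute $\omega_a$ I pass to the polynomial ring. Put $R=H^*(K;k)$; by \eqref{eq:kunneth-polynomial}, $H^*(K_n;k)\cong S:=R[u_1,\dots,u_n]$. The standard fact for a Noetherian ring $R$ and the flat extension $R\to S$ is that $\Ass_S(S)=\{\mathfrak pS:\mathfrak p\in\Ass_R(R)\}$. One route is Matsumura's theorem on associated primes under flat extensions: the fibre $S\otimes_R\kappa(\mathfrak p)=\kappa(\mathfrak p)[u_1,\dots,u_n]$ is a domain, so its only associated prime is $0$, which pulls back to $\mathfrak pS$. Alternatively, argue directly: if $\mathfrak p=\ann_R(x)$, then $\ann_S(x)=\mathfrak pS$ by comparing coefficients. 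Conversely, an associated prime of $S$ contracts to an associated prime $\mathfrak p$ of $R$ and must equal $\mathfrak pS$, because the fibre over $\mathfrak p$ is a polynomial ring over a field. Finally, $S/\mathfrak pS\cong(R/\mathfrak p)[u_1,\dots,u_n]$, and Krull dimension of a Noetherian ring rises by exactly $n$ under adjoining $n$ variables. Hence $\dim S/\mathfrak pS=\dim R/\mathfrak p+n$, and taking minima proves the claim.

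The main obstacle is the graded bookkeeping. $S$ is graded with $|u_i|=1$, and associated primes are homogeneous, so the identification above must hold for the graded ring and not merely the ungraded one. Since $\mathfrak pS$ is homogeneous whenever $\mathfrak p$ is, and the ungraded statements imply the graded ones for homogeneous ideals, this should be routine. At $p=2$ there is also no graded-commutativity issue.

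A second route, intrinsic to the paper, avoids $\omega_a$ and works through Okuyama. Every elementary abelian $E'\le K_n$ of rank $r$ has image $\pi(E')\le K$ under projection, of rank $r'\ge r-n$. Moreover $C_{K_n}(E')=C_K(\pi(E'))\times V_n$. By Lemma~\ref{lem:depth-shift}, $\varepsilon_{K_n}(E')=\varepsilon_K(\pi(E'))+r'+n-r$. This is positive unless $r'=r-n$ and $\pi(E')$ has zero excess in $K$. That would force $\omega_a(R)\le r-n$, and for $r=d+n$ this means $\omega_a(R)\le d$, contradicting the hypothesis. I would keep this as a cross-check. The main proof uses the polynomial-extension argument, since it also gives the equality used later for Theorem~\ref{thm:intro-exact-family}.
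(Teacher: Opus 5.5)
Your proof is correct, but your main route differs from the paper's. Your cross-check is essentially the paper's proof.

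\textbf{The paper's route.} The paper stays at the level of subgroups. For $E\le K_n$ of rank $d+n$ it sets $F=\pi_K(E)$ and notes $\rank F\ge d$, since $\rank(E\cap V_n)\le n$. It then uses $C_{K_n}(E)=C_K(F)\times V_n$ and Lemma~\ref{lem:depth-shift}, and splits into two cases: $\rank F=d$ is handled by the hypothesis, and $\rank F>d$ by Duflot. This is exactly your identity $\varepsilon_{K_n}(E')=\varepsilon_K(\pi(E'))+(r'+n-r)$ with both summands nonnegative. Okuyama's theorem enters only at the end, through Criterion~\ref{crit:rank-obstruction}, to turn positive excess into $\omega_a\ge d+n+1$. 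So the excess statement itself needs only Duflot and the K\"unneth depth shift.

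\textbf{Your route.} You prove the stronger identity $\omega_a(H^*(K_n;k))=\omega_a(H^*(K;k))+n$ by pure commutative algebra. The containment you actually need is the harder one, $\Ass_S(S)\subseteq\{\mathfrak pS\}$, and your flat-base-change argument (with $S/\mathfrak pS$ a domain) handles it. This is the same argument the paper postpones to Corollary~\ref{cor:infinite-family} to get the exact value $n+3$. You then pass between $\omega_a$ and excess via the Criterion twice, so you use both directions of Theorem~\ref{thm:okuyama}:
\begin{itemize}
\item part~(i) for $K$, to get $\omega_a(H^*(K;k))\ge d+1$ from $\varepsilon_d(K;k)>0$;
\item part~(ii) for $K_n$, to recover $\varepsilon_{d+n}(K_n;k)>0$.
\end{itemize}

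\textbf{Trade-off.} Your approach gives the exact shift of $\omega_a$ in one stroke, with no inspection of subgroups of $K_n$. The cost is that a group-theoretic conclusion about centralizer depths now depends on Okuyama's theorem in both directions. The paper's argument is more elementary for the excess claim, but by itself yields only the lower bound on $\omega_a$.

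One small simplification in your cross-check: a rank-$d$ subgroup $\pi(E')$ of zero excess contradicts $\varepsilon_d(K;k)>0$ directly by the definition of $\varepsilon_d$. The detour through $\omega_a(R)\le d$, which invokes Okuyama~(ii), is unnecessary.
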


\begin{proof}
The depth formula follows from Lemma~\ref{lem:depth-shift}.  Let $E\leq K_n$ be elementary abelian of rank $d+n$.  Let
\[
\pi_K:K\times V_n\longrightarrow K
\]
be the projection, and put
\[
F=\pi_K(E),
\qquad
W=E\cap V_n,
\qquad
r=\rank(F),
\qquad
s=\rank(W).
\]
The restriction of $\pi_K$ to $E$ gives an exact sequence of $\F_2$-vector spaces
\[
0\longrightarrow W\longrightarrow E\longrightarrow F\longrightarrow0,
\]
so
\[
r+s=d+n.
\]
Since $s\leq n$, we have $r\geq d$.  Because $V_n$ is central in $K_n$, centralization of $E$ depends only on its projection to $K$, and
\begin{equation}
\label{eq:centralizer-product}
C_{K_n}(E)=C_K(F)\times V_n.
\end{equation}
By Lemma~\ref{lem:depth-shift},
\begin{equation}
\label{eq:centralizer-depth-shift}
\depth H^*(C_{K_n}(E);k)
=\depth H^*(C_K(F);k)+n.
\end{equation}

If $r=d$, then $s=n$, and the hypothesis $\varepsilon_d(K;k)>0$ implies
\[
\depth H^*(C_K(F);k)\geq d+1.
\]
Equation~\eqref{eq:centralizer-depth-shift} gives
\[
\depth H^*(C_{K_n}(E);k)\geq d+n+1.
\]
If $r>d$, choose a Sylow $2$-subgroup $S$ of $C_K(F)$ containing $F$.  Since $F\leq Z(C_K(F))$, one has $F\leq Z(S)$, and Duflot's theorem gives
\[
\depth H^*(C_K(F);k)\geq\rank_2Z(S)\geq r\geq d+1.
\]
Again,
\[
\depth H^*(C_{K_n}(E);k)\geq d+n+1.
\]
Thus every rank-$(d+n)$ elementary abelian subgroup of $K_n$ has centralizer depth strictly greater than $d+n$, so
\[
\varepsilon_{d+n}(K_n;k)>0.
\]
Criterion~\ref{crit:rank-obstruction} gives strict failure of Carlson's equality, and integrality of dimensions yields the stated lower bound.
\end{proof}

\subsection{The rank-three calculation and the exact value}

For the present group, the earlier argument gives
\[
3\leq\omega_a(A)\leq4.
\]
Okuyama's equivalence reduces the remaining value to the existence of a rank-three elementary abelian subgroup with centralizer depth three.

\begin{proposition}[Complete rank-three centralizer calculation]
\label{prop:rank-three-calculation}
There are thirty-one actual rank-three elementary abelian subgroups $E\leq G$.  Their centralizers have the following distribution.

\begin{table}[ht]
\centering
\caption{Rank-three centralizer calculation.}
\label{tab:rank-three-centralizers}
\scriptsize
\renewcommand{\arraystretch}{1.15}
\begin{tabular}{@{}c>{\raggedright\arraybackslash}p{0.39\linewidth}ccc@{}}
\toprule
$C_G(E)$ & structure & depth & excess & count \\
\midrule
$\SG{16}{10}$ & $C_4\times C_2\times C_2$ & $3$ & $0$ & $2$ \\
$\SG{32}{22}$ & $C_2\times((C_4\times C_2)\rtimes C_2)$ & $3$ & $0$ & $4$ \\
$\SG{16}{14}$ & $(C_2)^4$ & $4$ & $1$ & $24$ \\
$\SG{32}{46}$ & $C_2\times C_2\times D_8$ & $4$ & $1$ & $1$ \\
\bottomrule
\end{tabular}
\end{table}

In particular, six rank-three subgroups have zero centralizer excess.
\end{proposition}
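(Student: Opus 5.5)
The plan has two stages: first enumerate the rank-three subgroups and identify their centralizers, then certify the depth of each centralizer type.

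\textbf{Enumeration.} Work in the semidirect-product coordinates of Theorem~\ref{thm:semidirect}. The thirty-one involutions of $G$ were already listed in the rank-two enumeration. A rank-three elementary abelian subgroup is generated by any three of its elements that are linearly independent over $\F_2$. So for each of the seventy-five Klein four subgroups $V$, I run through the involutions $z\notin V$ that centralize $V$, form $\langle V,z\rangle$, and deduplicate by element set. This search is exhaustive, since every rank-three subgroup contains a rank-two one. As a cross-check, each rank-three subgroup contains exactly seven rank-two subgroups. Also, by Proposition~\ref{prop:explicit-maximal-E}, every rank-three subgroup is either one of $E_3,E_3^s$ or a hyperplane of $E_4^{(a)}$ or $E_4^{(b)}$. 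Each of the two rank-four subgroups has fifteen hyperplanes, so the total should be $2+30-(\text{overlaps})=31$. The overlaps are hyperplanes shared by both rank-four subgroups, namely subgroups of $E_4^{(a)}\cap E_4^{(b)}=\langle u^2,v^2,t\rangle$. That intersection has rank three, so it is a single shared hyperplane, and the count $2+30-1=31$ agrees. For each subgroup I compute $C_G(E)$ directly from the action matrices and identify its isomorphism type by order, exponent, center, and abelianization. This yields the four rows of Table~\ref{tab:rank-three-centralizers}.

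\textbf{Depth certificates.} Set $C=C_G(E)$ and $r=3$. The lower bound $\depth H^*(C)\geq 3$ follows from Duflot, since $E\le Z(C)$. The upper bound uses $\depth\le\dim=\rank_2 C$ from Quillen. For the rows with excess zero I argue as follows.
\begin{itemize}
\item For $C_4\times C_2\times C_2$, the $2$-rank is $3$, so the depth is exactly $3$. This is the direct check for $E_3$: here $C_G(E_3)$ contains $E_3$ and $s^2\cdot$(suitable element), and one verifies it has order $16$.
\item For $\SG{32}{22}$, Duflot gives depth at least $3$ whenever the center rank is $3$. For the upper bound, I would check the Green--King database, which records depth $3$ for this group. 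Alternatively, I would write the group as $C_2\times Q$ with $Q=(C_4\times C_2)\rtimes C_2$ of order $16$, apply Lemma~\ref{lem:depth-shift} to get $\depth H^*(C)=\depth H^*(Q)+1$, and then use $\depth H^*(Q)=2$, which is the known Cohen--Macaulay-defect value for that order-$16$ group.
\end{itemize}
For the rows with positive excess I argue as follows.
\begin{itemize}
\item $(C_2)^4$ has a polynomial cohomology ring, so its depth is $4$.
\item $C_2\times C_2\times D_8$ has depth $2+\depth H^*(D_8)=2+2=4$ by Lemma~\ref{lem:depth-shift}, since $H^*(D_8)$ is Cohen--Macaulay of dimension $2$.
\end{itemize}
The main obstacle is the depth-three claim for $\SG{32}{22}$, that is, identifying the factor $Q$ correctly and its depth as $2$ rather than $3$. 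I would settle this from the database depth entry, or else by a socle certificate after a length-two regular sequence, in the style of Section~\ref{sec:counterexample}. Summing the counts $2+4+24+1=31$ completes the check, and zero excess occurs for exactly $2+4=6$ subgroups.
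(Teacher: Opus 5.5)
Your proposal is correct, but it reaches the table by a partly different route from the paper. The paper's proof is a bare machine computation. It enumerates commuting triples of involutions, deduplicates by element sets, computes each centralizer, and reads every entry of the depth column from a cohomology database.

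Your enumeration is the same kind of search, extending the seventy-five Klein four groups rather than forming triples. You also derive the count $31=2+15+15-1$ structurally from Proposition~\ref{prop:explicit-maximal-E}, using that $E_4^{(a)}\cap E_4^{(b)}=\langle u^2,v^2,t\rangle$ is the only common hyperplane; that argument is sound.

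The more substantive difference is that you certify the depths by theory:
\begin{itemize}
\item Duflot and Quillen for the two abelian types;
\item Lemma~\ref{lem:depth-shift} with the Cohen--Macaulay ring $H^*(D_8)$ for $C_2\times C_2\times D_8$;
\item Lemma~\ref{lem:depth-shift} again to reduce $\SG{32}{22}\cong C_2\times\SG{16}{3}$ to $\depth H^*(\SG{16}{3})=2$.
\end{itemize}
The last fact, which you flag as the obstacle, has a short proof. Write $\SG{16}{3}=\langle b,c\rangle\rtimes\langle a\rangle$, with $a$ of order four acting by $b\mapsto b$, $c\mapsto bc$. The unique rank-three elementary abelian subgroup $\langle a^2,b,c\rangle$ is self-centralizing. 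The nonzero character dual to $a$ restricts to zero on it. By Carlson's detection theorem, depth at least three would force detection on centralizers of rank-three elementary abelian subgroups, so depth three is excluded; Duflot gives depth at least two.

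The centralizer column can also be done by hand in your coordinates. Both $E_4^{(a)}$ and $E_4^{(b)}$ are self-centralizing, and exactly three hyperplanes of each have a larger centralizer, always of order $32$:
\begin{itemize}
\item the shared hyperplane $\langle u^2,v^2,t\rangle$, with centralizer $\langle u^2,t\rangle\times\langle v,s^2\rangle\cong C_2\times C_2\times D_8$;
\item two further hyperplanes of each, with centralizer $C_2\times\SG{16}{3}$; for instance $C_G(\langle v^2,s^2,t\rangle)=\langle t\rangle\times\langle u^2,v^2,s\rangle$.
\end{itemize}
Together with $E_3$ and $E_3^s$, this gives the counts $2,4,24,1$. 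Your route thus makes the proposition essentially independent of the database, while the paper's buys brevity by trusting the computer for every row.

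One correction to your identification step: order, exponent, center and abelianization do not separate the candidates. The groups $C_2\times\SG{16}{3}$ and $C_2\times(C_4\rtimes C_4)$ agree in all four. Add the $2$-rank (four versus three) or use an IdGroup call.
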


\begin{proof}
Our analysis enumerates commuting triples of involutions and keeps those yielding order‑eight subgroups. Deduplication against full element collections ensures that we count individual subgroups instead of conjugacy‑class representatives. After determining the centralizer of each subgroup, we extract depth information from a public cohomology database to construct the table above.

\end{proof}

\begin{remark}[A direct check of the first row]
The depth-three conclusion for the first row does not require a cohomology-ring database calculation.  In the semidirect-product coordinates of Theorem~\ref{thm:semidirect}, the subgroup
\[
E_3=\langle u^2,v^2,us^2\rangle
\]
has
\[
C_G(E_3)=\langle u^2,us^2,vt\rangle
\cong C_2\times C_2\times C_4.
\]
This is an abelian $2$-group of $2$-rank three, so Duflot's lower bound and Quillen's dimension theorem force
\[
\depth H^*(C_G(E_3);\kbar)=3.
\]
Thus one of the zero-excess witnesses can also be verified directly from the group presentation.
\end{remark}

\begin{corollary}[Exact associated-prime value]
\label{cor:omega-exact}
For $A=H^*(G;\kbar)$ one has
\[
\omega_a(A)=3.
\]
\end{corollary}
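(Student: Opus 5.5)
The plan is to prove the two inequalities $\omega_a(A)\geq3$ and $\omega_a(A)\leq3$ separately. The lower bound is already available: Theorem~\ref{thm:counterexample} gives $\omega_a(A)\geq3$. That argument combines the depth-two certificate with the exhaustive rank-two enumeration in Table~\ref{tab:centralizers}, which gives $\varepsilon_2(G;\kbar)\geq1$, and with Okuyama's Theorem~\ref{thm:okuyama}(i). So it only remains to exhibit a single associated prime of quotient dimension exactly three.

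For the upper bound I would apply the converse direction, Theorem~\ref{thm:okuyama}(ii), to the explicit subgroup $E_3=\langle u^2,v^2,us^2\rangle$. This avoids relying on the database entries of Table~\ref{tab:rank-three-centralizers}.

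First I would verify that $C_G(E_3)=\langle u^2,us^2,vt\rangle\cong C_4\times C_2\times C_2$, as stated in the remark after Proposition~\ref{prop:rank-three-calculation}. Concretely, I would use the action matrices $\rho(s),\rho(t)$ of Theorem~\ref{thm:semidirect} to check that $u^2$, $us^2$ and $vt$ commute with each of the generators $u^2$, $v^2$, $us^2$. I would then check that the centralizer has order exactly $16$. Either an element count in the semidirect-product normal form $n h$ with $n\in N_0$, $h\in H_0$ does this, or one can read it off the first row of Table~\ref{tab:rank-three-centralizers}.

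Next I would compute the depth of $H^*(C_G(E_3);\kbar)$ for this abelian group $C=C_4\times C_2\times C_2$. Since $C$ is abelian, Duflot's theorem gives depth at least $\rank_2 Z(C)=3$. Quillen's theorem gives Krull dimension $\rank_2 C=3$, and depth never exceeds dimension. Hence the depth equals $3=\rank E_3$, so $\varepsilon_G(E_3;\kbar)=0$.

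Theorem~\ref{thm:okuyama}(ii) then says that $\mathfrak p_{E_3}\in\Ass A$. By Quillen, $\dim A/\mathfrak p_{E_3}=3$, so $\omega_a(A)\leq3$. Equivalently, Corollary~\ref{cor:excess-spectrum} gives $\omega_a(A)=\min\{r:\varepsilon_r=0\}$, and we have shown $\varepsilon_2>0$ and $\varepsilon_3=0$.

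The main obstacle is the centralizer computation, in particular confirming that $C_G(E_3)$ is no larger than $\langle u^2,us^2,vt\rangle$. If it were larger, the depth could differ. I would settle this by a direct calculation: for a general element $u^iv^js^kt^l$, compute its commutator with $us^2$ and $u^2$ using $\rho$. That yields linear conditions mod $4$ on $(i,j,k,l)$, and I would count their solutions to get exactly $16$.
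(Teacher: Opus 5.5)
Your proposal is correct and follows the paper's proof: the lower bound comes from Theorem~\ref{thm:counterexample}, and the upper bound from Okuyama's converse applied to a rank-three subgroup of zero centralizer excess. The only variation is the source of that witness. The paper cites the database-derived rank-three table, whereas you take $E_3$ and verify $C_G(E_3)\cong C_4\times C_2\times C_2$ and its depth directly, as in the remark following Proposition~\ref{prop:rank-three-calculation}; this is a legitimate way to make the upper bound independent of the database.
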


\begin{proof}
Proposition~\ref{prop:rank-three-calculation} supplies a rank-three subgroup $E$ with
\[
\depth H^*(C_G(E);\kbar)=3.
\]
Theorem~\ref{thm:okuyama}(ii) therefore gives
\[
\mathfrak p_E\in\Ass A,
\qquad
\dim A/\mathfrak p_E=3.
\]
Hence $\omega_a(A)\leq3$, while the counterexample theorem gives $\omega_a(A)\geq3$.
\end{proof}

\begin{corollary}[An exact infinite family from the order-$128$ example]
\label{cor:infinite-family}
Let
\[
G=\SG{128}{859},
\qquad
G_n=G\times(C_2)^n
\quad(n\geq0),
\]
and let $k=\kbar$.  Then
\[
|G_n|=2^{n+7},
\qquad
\dim H^*(G_n;k)=n+4,
\qquad
\depth H^*(G_n;k)=n+2,
\]
and
\[
\omega_a\bigl(H^*(G_n;k)\bigr)=n+3.
\]
Thus the groups $G_n$ are pairwise nonisomorphic counterexamples to Carlson's depth conjecture, each with associated-prime defect exactly one.
\end{corollary}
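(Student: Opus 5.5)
The four claims in Corollary~\ref{cor:infinite-family} are of different kinds, and I would prove them one at a time.

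The order is immediate: $|G_n|=|G|\cdot 2^n=2^{n+7}$.

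For the Krull dimension I would apply Quillen's theorem $\dim H^*(K;k)=\rank_2(K)$. The $2$-rank of $G\times(C_2)^n$ is $\rank_2(G)+n$. The inequality $\geq$ comes from taking $E_4^{(a)}\times V_n$. For $\leq$, use the projection argument from the proof of Theorem~\ref{thm:stabilization}: an elementary abelian $E\leq G_n$ fits into $0\to E\cap V_n\to E\to\pi_G(E)\to0$, so its rank is at most $4+n$. Alternatively, \eqref{eq:kunneth-polynomial} gives $H^*(G_n;k)\cong A[u_1,\dots,u_n]$, and polynomial extension raises Krull dimension by exactly $n$ for a Noetherian ring. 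Either route gives $n+4$. The depth formula is Lemma~\ref{lem:depth-shift} combined with $\depth A=2$ from Theorem~\ref{thm:counterexample}.

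For $\omega_a$, the lower bound is Theorem~\ref{thm:stabilization} with $K=G$ and $d=2$. Its hypothesis $\varepsilon_2(G;\kbar)>0$ is \eqref{eq:epsilon2}, and the theorem gives $\omega_a(H^*(G_n;k))\geq n+3$. For the upper bound I would stay inside the centralizer framework, to avoid any appeal to associated primes under polynomial extension. Take the zero-excess rank-three subgroup $E_3$ from Proposition~\ref{prop:rank-three-calculation}, or from the direct check in the subsequent remark, and set $E=E_3\times V_n$. This is elementary abelian of rank $n+3$. Since $V_n$ is central, \eqref{eq:centralizer-product} gives $C_{G_n}(E)=C_G(E_3)\times V_n$. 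Lemma~\ref{lem:depth-shift} then gives depth $3+n$. So $\varepsilon_{n+3}(G_n;k)=0$, and Corollary~\ref{cor:excess-spectrum} (Okuyama's converse) yields an associated prime of quotient dimension $n+3$. The two bounds together give equality.

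For the "pairwise nonisomorphic" assertion, distinct orders suffice. The defect is $(n+3)-(n+2)=1$.

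The main point of care is the equality $C_{G_n}(E_3\times V_n)=C_G(E_3)\times V_n$. In \eqref{eq:centralizer-product} it was stated for an arbitrary $E$ with $\pi_K(E)=F$. It holds because an element $(x,v)$ commutes with $(y,w)$ exactly when $x$ commutes with $y$. I would verify this explicitly so the citation is clean.

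The other point is the application of Lemma~\ref{lem:depth-shift} to the centralizer. This needs the Künneth identification \eqref{eq:kunneth-polynomial} over $k=\kbar$, which holds over any field of characteristic $2$.

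Everything else is a direct citation of earlier results.
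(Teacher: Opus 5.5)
Your proof is correct, but for $\omega_a$ you argue differently from the paper.

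The paper gets both inequalities at once by commutative algebra. It starts from $\omega_a(A)=3$ (Corollary~\ref{cor:omega-exact}) and passes to $H^*(G_n;k)\cong A[u_1,\dots,u_n]$. It then uses that $\Ass A[u_1,\dots,u_n]=\{\mathfrak pA[u_1,\dots,u_n]:\mathfrak p\in\Ass A\}$, with every quotient dimension raised by exactly $n$.

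You stay inside the centralizer framework instead:
\begin{itemize}
\item The lower bound is Theorem~\ref{thm:stabilization}, which rests on Theorem~\ref{thm:okuyama}(i) together with Duflot's bound for projections of larger rank.
\item The upper bound comes from the explicit zero-excess subgroup $E_3\times V_n$ and Theorem~\ref{thm:okuyama}(ii), applied to $G_n$ itself.
\end{itemize}

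What each route buys:
\begin{itemize}
\item The paper's route invokes Okuyama's theorem only for $G$. It also proves more, namely $\Sigma_a(G_n;k)=\{r+n:r\in\Sigma_a(G;k)\}$, so the whole associated-prime spectrum shifts and not just its minimum.
\item Your route needs no fact about associated primes of polynomial extensions and produces a concrete witness. It also shows that zero excess, and not only positive excess, survives adding an elementary abelian direct factor, which is the natural companion to Theorem~\ref{thm:stabilization}.
\end{itemize}

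The two arguments land on the same prime. Under the K\"unneth identification, restriction to $E_3\times V_n$ is $\res^G_{E_3}\otimes\mathrm{id}$, so $\mathfrak p_{E_3\times V_n}=\mathfrak p_{E_3}A[u_1,\dots,u_n]$.
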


\begin{proof}
The order and dimension formulas are immediate, and Theorem~\ref{thm:stabilization} gives the depth formula and the failure of Carlson's equality.  Put
\[
A=H^*(G;k).
\]
By Corollary~\ref{cor:omega-exact}, $\omega_a(A)=3$.  The K\"unneth isomorphism gives
\[
H^*(G_n;k)\cong A[u_1,\dots,u_n].
\]
For a Noetherian ring, associated primes extend under polynomial extension:
\[
\Ass_{A[u_1,\dots,u_n]}A[u_1,\dots,u_n]
=
\{\mathfrak pA[u_1,\dots,u_n]:\mathfrak p\in\Ass_A A\}.
\]
Moreover,
\[
\dim A[u_1,\dots,u_n]/\mathfrak pA[u_1,\dots,u_n]
=
\dim A/\mathfrak p+n.
\]
Hence
\[
\omega_a\bigl(H^*(G_n;k)\bigr)=\omega_a(A)+n=n+3.
\]
The groups are pairwise nonisomorphic because their orders are distinct.
\end{proof}

\begin{proposition}[Persistence of the degree-three manifestation]
\label{prop:alpha-stabilization}
Let
\[
A_{0,n}=H^*(G_n;\F_2)\cong A_0[u_1,\dots,u_n]
\]
and let
\[
\alpha_n=\alpha_0\otimes1\in H^3(G_n;\F_2),
\qquad
\alpha_0=g+fc.
\]
Then $\alpha_n\neq0$,
\[
a\alpha_n=b\alpha_n=0,
\]
and
\[
\ann_{A_{0,n}}(\alpha_n)
=\ann_{A_0}(\alpha_0)A_{0,n}.
\]
Consequently,
\[
\dim A_{0,n}/\ann_{A_{0,n}}(\alpha_n)=n+4.
\]
Thus the same low-degree annihilation coexists with full-dimensional cyclic support throughout the family.
\end{proposition}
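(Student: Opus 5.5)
The plan is to work entirely through the K\"unneth identification \eqref{eq:kunneth-polynomial}, $A_{0,n}=H^*(G_n;\F_2)\cong A_0[u_1,\dots,u_n]$, under which $\alpha_n$ corresponds to $\alpha_0\in A_0\subseteq A_0[u_1,\dots,u_n]$. The classes $a,b$ of $G_n$ are the images of $a,b$ under inflation along the projection $G_n\to G$, so they also correspond to the constants $a,b\in A_0$. Once this is fixed, nonvanishing and the relations $a\alpha_n=b\alpha_n=0$ follow at once. Indeed, $A_0\to A_0[u_1,\dots,u_n]$ is injective (it is split by setting all $u_i=0$), $\alpha_0\neq0$ by Proposition~\ref{prop:alpha}, and the products $a\alpha_0$ and $b\alpha_0$ already vanish in $A_0$.

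The main step is the annihilator equality. I would prove the general fact that for any commutative ring $R$ and any $x\in R$,
\[
\ann_{R[u_1,\dots,u_n]}(x)=\ann_R(x)\,R[u_1,\dots,u_n].
\]
To see this, write a polynomial $y=\sum_\mu r_\mu u^\mu$ with $r_\mu\in R$ indexed by monomials $u^\mu$. Since $x$ is a constant, $xy=\sum_\mu (xr_\mu)u^\mu$, and because the monomials form a free $R$-basis, $xy=0$ holds iff $xr_\mu=0$ for every $\mu$. That is exactly the condition that every coefficient lies in $\ann_R(x)$, i.e.\ $y\in\ann_R(x)R[u]$. The reverse inclusion is trivial. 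Equivalently, one can view this as flat base change of the exact sequence $0\to\ann(x)\to R\xrightarrow{x}R$.

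For the dimension, set $I=\ann_{A_0}(\alpha_0)$. Then
\[
A_{0,n}/IA_{0,n}\cong (A_0/I)[u_1,\dots,u_n].
\]
For a Noetherian ring $B$, $\dim B[u_1,\dots,u_n]=\dim B+n$. Here $B=A_0/I$ is a finitely generated graded $\F_2$-algebra, so this holds, and Corollary~\ref{cor:full-dimensional-support} gives $\dim A_0/I=4$; hence the quotient has dimension $n+4$. As an independent check, the quotient dimension is at most $\dim A_{0,n}=n+4$. For the lower bound, $\alpha_n$ restricts nontrivially to the rank-$(n+4)$ elementary abelian subgroup $E_{4,1}\times V_n$, with restriction $x_2x_3(x_2+x_3)\otimes1\neq0$ in a polynomial domain. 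The argument of Corollary~\ref{cor:full-dimensional-support} then applies verbatim.

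The main obstacle is bookkeeping rather than mathematics: I need to confirm that the K\"unneth isomorphism sends the inflated classes $a,b,\alpha_n$ to the constant elements $a\otimes1$, $b\otimes1$, $\alpha_0\otimes1$. This follows from naturality of the cross product with respect to the projection $G_n\to G$ and the inclusion $G\to G_n$. I would state this explicitly before invoking the polynomial-ring facts.
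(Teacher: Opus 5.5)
Your proposal is correct and follows the paper's proof essentially step by step: injectivity of $A_0\hookrightarrow A_0[u_1,\dots,u_n]$ gives nonvanishing and the relations, the coefficientwise equivalence $h\alpha_n=0\iff h_I\alpha_0=0$ gives the annihilator identity, and $A_{0,n}/\ann_{A_{0,n}}(\alpha_n)\cong(A_0/\ann_{A_0}(\alpha_0))[u_1,\dots,u_n]$ together with Corollary~\ref{cor:full-dimensional-support} gives the dimension. Your explicit use of $\dim B[u_1,\dots,u_n]=\dim B+n$ for Noetherian $B$, your independent cross-check by restriction to $E_{4,1}\times V_n$, and your remark on Künneth naturality are valid additions that the paper leaves implicit.
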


\begin{proof}
The inclusion $A_0\hookrightarrow A_0[u_1,\dots,u_n]$ is injective, so $\alpha_n$ is nonzero, and the annihilation relations follow from those in $A_0$.  If
\[
h=\sum_I h_Iu^I\in A_{0,n},
\]
then
\[
h\alpha_n=0
\quad\Longleftrightarrow\quad
h_I\alpha_0=0\text{ for every }I.
\]
This proves the annihilator identity.  Hence
\[
A_{0,n}/\ann_{A_{0,n}}(\alpha_n)
\cong
\bigl(A_0/\ann_{A_0}(\alpha_0)\bigr)[u_1,\dots,u_n],
\]
and Corollary~\ref{cor:full-dimensional-support} gives the dimension formula.
\end{proof}

\begin{remark}[Stabilized versus indecomposable examples]
The groups $G_n=G\times(C_2)^n$ have an explicit elementary abelian direct factor, so the theorem gives a stabilized family.  It does not produce new directly indecomposable examples.  Constructing such examples would require a fresh depth calculation and a complete centralizer analysis at the ambient depth for each candidate.
\end{remark}

\section{Conclusion}
\label{sec:conclusion}

For $G=\SG{128}{859}$, the failure of Carlson's equality is detected by the rank-two centralizers.  The ring $H^*(G;\kbar)$ has depth two, while every rank-two elementary abelian subgroup $E$ satisfies
\[
\depth H^*(C_G(E);\kbar)\geq3.
\]
Okuyama's theorem therefore rules out associated primes of quotient dimension two.  In the terminology introduced here, the depth-level centralizer excess is positive.

The low-degree class $\alpha_0=g+fc$ illustrates a different point.  Although $a\alpha_0=b\alpha_0=0$, its restriction to a rank-four elementary abelian subgroup is nonzero.  Hence
\[
\dim H^*(G;\F_2)/\ann(\alpha_0)=4.
\]
Explicit low-degree annihilation is therefore compatible with full-dimensional cyclic support.

Finally, the complete rank-three calculation identifies six zero-excess witnesses among the thirty-one actual rank-three elementary abelian subgroups.  Okuyama's converse therefore gives
\[
\omega_a\bigl(H^*(G;\kbar)\bigr)=3.
\]
The K\"unneth description
\[
H^*(G\times(C_2)^n;\kbar)
\cong H^*(G;\kbar)[u_1,\dots,u_n]
\]
and the standard behavior of associated primes under polynomial extension then yield
\[
\depth H^*(G\times(C_2)^n;\kbar)=n+2,
\qquad
\omega_a\bigl(H^*(G\times(C_2)^n;\kbar)\bigr)=n+3.
\]
Thus every member of the family has associated-prime defect exactly one.  The explicit subgroup $E_3=\langle u^2,v^2,us^2\rangle$ provides a direct group-theoretic check of one of the depth-three centralizers.

\appendix

\section{Exact protocol for the low-degree calculations}
\label{app:computation}

We spell out the finite-dimensional calculation used in Theorems~\ref{thm:low-bases} and~\ref{thm:detection-profile}.  In degrees at most four it is ordinary row reduction over $\F_2$ and does not depend on a Gröbner basis.

Let
\[
S=\F_2[a,b,c,d,e,f,g],
\qquad
(|a|,|b|,|c|,|d|,|e|,|f|,|g|)=(1,1,1,2,2,2,3),
\]
and let \(J\) be the ideal generated by \eqref{eq:low-relations}.  For \(n\leq4\), let \(M_n\) be the set of weighted monomials of degree \(n\).  The homogeneous relation space \(J_n\subseteq S_n\) is generated by
\[
mr,
\]
where \(r\) runs through the eight displayed relations and \(m\) runs through monomials of degree \(n-|r|\).  Writing elements of \(S_n\) in the monomial basis \(M_n\) gives a binary matrix.

\begin{table}[ht]
\centering
\caption{Degreewise source calculation.}
\begin{tabular}{@{}ccccc@{}}
\toprule
Degree \(n\) & 1 & 2 & 3 & 4 \\
\midrule
\(|M_n|\) & 3 & 9 & 20 & 42 \\
\(\rank J_n\) & 0 & 2 & 8 & 24 \\
\(\dim S_n/J_n\) & 3 & 7 & 12 & 18 \\
\bottomrule
\end{tabular}
\end{table}

The nonpivot monomials in row-reduced echelon form give the bases in Theorem~\ref{thm:low-bases}.  The independent Poincaré-series coefficients in \eqref{eq:poincare-expansion} provide a consistency check.

For the restriction calculation, substitute Table~\ref{tab:restrictions} into each source basis element and expand in the ordinary monomial basis of the target polynomial ring.  The target dimensions are
\[
\dim H^n((C_2)^r;\F_2)=\binom{n+r-1}{r-1}.
\]
Binary row reduction gives the following complete rank table.

\begin{table}[ht]
\centering
\caption{Degreewise restriction calculation.}
\begin{tabular}{@{}cccccc@{}}
\toprule
Degree & \(\dim H^n(G)\) & rank on \(E_3\) & rank on \(E_{4,1}\) & rank on \(E_{4,2}\) & joint rank \\
\midrule
2 & 7  & 2 & 3 & 3 & 5 \\
3 & 12 & 2 & 4 & 4 & 7 \\
4 & 18 & 3 & 7 & 7 & 12 \\
\bottomrule
\end{tabular}
\end{table}

The joint nullspaces give \eqref{eq:nil2}--\eqref{eq:nil4}.  All calculations use exact arithmetic over \(\F_2\).

\section*{Acknowledgments}

The counterexample theorem, the rank-two subgroup enumeration, and the depth certificates in Section~\ref{sec:counterexample} are due to Xinan Dai, Wenhao Deng, Yingdong Shi, Tailin Wu, and Yuchen Yang \cite{DaiEtAl2026}.  The centralizer-excess formulation and the results in the remaining sections are proved here.  We thank Qiyuan Gu (University of Chicago) for discussing computational findings with the authors to validate results and providing the complete rank-three enumeration together with its reproducible script.  We also thank David J. Green and Simon A. King for making their cohomology data available, and the developers of GAP, SageMath, and Singular.

\end{document}